\documentclass[a4paper]{amsart}

\usepackage{amsmath,amsthm}
\usepackage{amsfonts}
\usepackage{graphicx}
\usepackage{todonotes}
\usepackage{enumitem}
\usepackage{subfigure}
\usepackage[a4paper, total={7in, 9in}]{geometry}

\DeclareMathOperator{\cof}{cof}
\DeclareMathOperator{\Div}{div}

\newtheorem{theorem}{Theorem}
\newtheorem{lemma}{Lemma}

\newcommand{\dv}{\operatorname{div}}
\newcommand{\velo}{{\mathbf v}}
\newcommand{\welo}{{\mathbf w}}
\newcommand{\wass}{{\mathbf{W}_2}}
\newcommand{\mf}{{\mathcal M}}
\newcommand{\nml}{{\boldsymbol \nu}}
\newcommand{\energy}{\mathcal{E}}
\newcommand{\nrg}{\mathcal{E}}
\newcommand{\ent}{\mathcal{H}}
\newcommand{\setR}{{\mathbb R}}
\newcommand{\prb}{{\mathcal P}}
\newcommand{\dd}{\,\mathrm{d}}
\newcommand{\dn}{\mathrm{d}}
\newcommand{\dt}{{\Delta t}}
\newcommand{\dff}{\operatorname D}
\newcommand{\id}{\operatorname{id}}
\newcommand{\eins}{{\mathbf 1}}
\newcommand{\grd}{\operatorname{grad}}
\newcommand{\ansatz}{{\mathcal X_{\mathbf\xi}}}
\newcommand{\vansatz}{{\mathcal V}}
\newcommand{\bfx}{{\mathbf x}}
\newcommand{\bfxi}{{\mathbf\xi}}
\newcommand{\xtoX}{\mathbf X_{\mathbf\xi}}
\newcommand{\xtorho}{\mathbf \rho_{\mathbf\xi}}
\newcommand{\cadlag}{c\`{a}dl\`{a}g}
\newcommand{\indy}{\mathbf 1}
\newcommand{\aint}{\mathop{\,\rlap{-}\!\!\!\int}\nolimits}
\newcommand{\tr}{\operatorname{tr}}
\newcommand{\R}{\setR}

\newcommand{\eps}{\epsilon}
\newcommand{\C}{\mathcal{C}}
\newcommand{\rd}{\mathrm{d}}
\newcommand{\Dtime}{1}

\newcommand{\Rd}{{\mathbb{R}^d}}
\newcommand{\grad}{\nabla}
\def\P{{\mathcal P}}

\title{Lagrangian schemes for Wasserstein Gradient Flows}
\author{Jose A. Carrillo}
\address[J. A. Carrillo]{Department of Mathematics, Imperial College London, Huxley Building, SW72AZ London}
\email{carrillo@imperical.ac.uk}
\author{Daniel Matthes}
\address[D. Matthes]{Department of Mathematics, TU Munich, Boltzmannstr. 3, 85748 Garching}
\email{matthes@ma.tum.de}
\author{Marie-Therese Wolfram}
\address[M.-T. Wolfram]{Mathematics Institute, University of Warwick, CV47AL Coventry, and Radon Institute of Computational and Applied Mathematics, Altenbergerstr. 69, 4040 Linz, Austria}
\email{m.wolfram@warwick.ac.uk}

\title{Lagrangian schemes for Wasserstein Gradient Flows}%\label{chap:1}

\begin{document}

\begin{abstract}  
  This paper reviews different numerical methods for specific examples of Wasserstein gradient flows:
  we focus on nonlinear Fokker-Planck equations,
  but also discuss discretizations of the parabolic-elliptic Keller-Segel model
  and of the fourth order thin film equation.
  The methods under review are of Lagrangian nature, that is,
  the numerical approximations trace the characteristics of the underlying transport equation
  rather than solving the evolution equation for the mass density directly.
  The two main approaches are based on integrating the equation for the Lagrangian maps on the one hand,
  and on solution of coupled ODEs for individual mass particles on the other hand.
\end{abstract}

\maketitle

%%%%%%%%%%%%%%%%%%%%%%%%%%%%%

\section{Introduction}\label{sec:intro}
\noindent
In most general terms, $L^2$-Wasserstein gradient flows are evolution equations
for a time-dependent probability density $\rho_{(\cdot)}:[0,T]\times\Omega\to\setR_{\ge0}$
on a domain $\Omega \subset \mathbb{R}^d$ that can be written as follows:
\begin{align}
  \label{gf:0}
  \partial_t\rho_t + \dv(\rho_t\,\velo_t) = 0, \quad \text{with} \quad \velo_t = -\left[\nabla\frac{\delta\nrg}{\delta\rho}(\rho_t)\right],
\end{align}
subject to no-flux boundary conditions. In equation \eqref{gf:0}  the probability density $\rho$ is transported along a time-dependent gradient vector field $\velo_t:\Omega\to\setR^d$;
and the pressure generating that vector field depends on $\rho$ through the variational derivative of a free energy functional $\nrg$.
In this chapter, we shall review Lagrangian methods for computing the solutions of \eqref{gf:0}.

\subsection{Examples for \eqref{gf:0}}
The most prominent example of the form \eqref{gf:0} is the linear heat equation $\partial_t\rho_t=\Delta\rho_t$.
Indeed, choosing $\nrg$ as Boltzmann's entropy functional, $\nrg(\rho)=\ent_1(\rho):=\int_\Omega\rho\log\rho\dd x$,
one obtains:
\begin{align*}
  \frac{\delta\ent_1}{\delta\rho}(\rho_t) = \log\rho_t,
  \quad\text{and thus}\quad
  \partial_t\rho_t=\dv(\rho_t\nabla\log\rho_t) = \Delta\rho_t.
\end{align*}
Note that this is a hydrodynamical view on the heat equation,
which is complementary to the more classical understanding in probabilistic terms
as stochastic motion of Brownian particles.

Among the many further evolution equations of the form \eqref{gf:0}, 
there are non-linear and non-local Fokker-Planck equations,
\begin{align}
  \label{gf:FP}
  \partial_t\rho_t = \Delta \Phi(\rho_t) + \dv\big(\rho_t\,[\nabla V + \rho_t\ast \nabla W]\big),
\end{align}
where $\Phi:\setR_{\ge0}\to\setR_{\ge0}$ is a nonlinearity subject to certain conditions
(e.g., $\Phi(\rho)=\rho^m$ with arbitary $m>0$ is allowed),
$V\in C^2(\Omega)$ is an external potential,
and $W\in C^2(\setR^d)$ represents the potential of a non-local inter-particle interaction.
The corresponding entropy functional is
\begin{align}
  \label{gf:FPenergy}
  \nrg(\rho) = \ent_{h,V,W}(\rho) := \int_\Omega \big[h(\rho) + \rho V + \frac12 \rho( W\ast\rho)\big]\dd x,
\end{align}
with an entropy density $h:\setR_{\ge0}\to\setR$ such that $\rho h''(\rho)=\Phi'(\rho)$.
In the variational context, equation \eqref{gf:FP} is augmented with the natural boundary conditions,
which are no-flux,
\begin{align}\label{e:bc}
 \rho \nabla \big[h'(\rho) + V + W*\rho\big]\cdot \nml = 0 \quad \text{ for all } x \in \partial \Omega,\,t>0,
\end{align}
with $\nml$ the unit outwards normal to the boundary of $\Omega$.
For an overview on the vast field of applications of porous medium or filtration equations,
which are \eqref{gf:FP} with $V\equiv0$ and $W\equiv0$,
we refer to the book of Vazquez \cite{book:Vazquez}.
More recently, a rich theory has been developed for \eqref{gf:FP} with irregular potentials $W$.
A case of particular interest is the parabolic-elliptic Keller-Segel model,
which is \eqref{gf:FP} with linear diffusion $\Phi(r)=r$, with $V\equiv0$,
and $W$ given by the Newtonian potential, that is $W(z)=\frac1{2\pi}\log|z|$ in dimension $d=2$,
see \cite{DP,BDP,gf-BCC,gf-BCC2,CCY} and the references therein.
Complementary to that, a solution theory has been developed, see e.g. \cite{gf-CFetc},
for quite general interaction potentials $W$, even in the absense of diffusion, $\Phi\equiv0$.
Prominent applications are in studies of the collective dynamics of bird flocks or fish schools,
see for example \cite{BCL, HB2010, gf-CFetc, BGL2012,BCLR2,BCLR1},
but such equations also find applications in physics,
for example for granular media \cite{T2000,CMV2003,LT2004,cmcv-06}
or in material sciences e.g \cite{HP2005}.

Examples of fourth order equations of type \eqref{gf:0} include
\begin{align}
  \label{gf:TF}
  \partial_t\rho_t + \dv(\rho_t\nabla\Delta\rho_t)=0,
  \quad\text{and}\quad
  \partial_t\rho_t + \dv\left(\rho_t\nabla\frac{\Delta\sqrt{\rho_t}}{\sqrt{\rho_t}}\right)=0.
\end{align}
These equations are, respectively, known as
the thin film equation with linear mobility from lubrication theory \cite{gf-bertozzi},
and the quantum drift diffusion equation from semi-conductor modeling \cite{gf-juengel}.
The corresponding free energy functionals are the Dirichlet energy, and the Fisher information,
\begin{align}
  \label{gf:TFenergy}
  \nrg_\text{Dirichlet}(\rho) =\frac12\int_\Omega|\nabla\rho|^2\dd x
  \quad\text{and}\quad
  \nrg_\text{Fisher}(\rho) = \int_\Omega|\nabla\log\rho|^2\rho\dd x.
\end{align}

{\bf Notation and Basic concepts.-} For definiteness, let $\Omega\subset\setR^d$ be a bounded and convex domain throughout this chapter. Let us denote by $\mathcal{P}(\Omega)$ the set of probability measures with support on the open set $\Omega$. Further, recall the notation of push-forward of a density $\rho:\Omega\to\setR$ through a map $T:\Omega\to\Omega$:
\begin{align*}
  T\#\rho = \frac{\rho}{\det\dff T}\circ T^{-1}.
\end{align*}
There are several possible definitions of the euclidean transport distance $\wass$ between two probability measures, the most robust being the infimum in the Kantorovich problem:
\begin{align}
  \wass(\rho,\eta)^2 = \inf_{\gamma\in\Gamma(\rho,\eta)}\int_{\Omega\times\Omega}|x-y|^2\dd\gamma(x,y),
\end{align}
where $\Gamma(\rho,\eta)$ is the set of all couplings between $\rho$ and $\eta$,
that is,
\begin{align*}
  \Gamma(\rho,\eta) = \left\{\gamma\in\prb(\Omega\times\Omega)
  \,\middle|\,\forall A,B\subseteq\Omega:\gamma[A\times\Omega]=\rho[A],\,\gamma[\Omega\times B]=\eta[B] \right\}.
\end{align*}
For the Lagrangian approach discussed here, we shall use Monge's original definition,
which is equivalent to the one above at least if $\rho$ is absolutely continuous, see \cite{Brenier}:
\begin{align*}
  \wass(\rho,\eta)^2 = \inf_{T:\eta=T\#\rho}\int_\Omega |T(x)-x|^2\rho(x)\dd x.
\end{align*}
We refer to \cite{V2003,santambrogio2015optimal} for the basics of optimal transport concepts. There also exists an Eulerian approach to optimal transport introduced by Benamou and Brenier \cite{BB00}, we will postopone its dicussion to the last section where we will make use of it.

\subsection{Lagrangian formulation of \eqref{gf:0}}

The transport character of \eqref{gf:0} calls for a Lagrangian formulation of the dynamics.
We adopt the microscopic picture of particles that move along the vector field $\velo$,
which is influenced by themselves through the induced change of the macroscopic particle density $\rho$.
For definiteness, let $\Theta\subset\setR^d$ be a reference domain
and $\theta\in\prb(\Theta)$ be a reference probability density.
A canonical choice is $\Theta=\Omega$ and $\theta=u_0$, the initial condition,
but we shall discuss further possible choices below.
The Lagrangian map $X_{(\cdot)}:[0,T]\times\Theta\to\Omega$ is associated to a solution $\rho_t$ of \eqref{gf:0},
such that $t\mapsto X_t(\xi)\in\Omega$ describes the trajectory of a particle with label $\xi\in\Theta$.
That is,
\begin{align}
  \partial_tX_t = \velo_t\circ X_t.
\end{align}
Provided that $X_0$ is chosen such that $X_0\#\theta=\rho_0$, i.e., $X_0$ realizes the initial density,
then
\begin{align*}
  \rho_t=X_t\#\theta
\end{align*}
at any $t\in[0,T]$. Next we introduce
\begin{align*}
  \nrg^\#(X) := \nrg(X\#\theta),
\end{align*}
and use that for every diffeomorphism $\bar X:\Theta\to\Omega$, 
\begin{align*}
  \frac1 \theta\frac{\delta\nrg^\#}{\delta X}(\bar X)
  = \nabla\frac{\delta\nrg}{\delta\rho}(\bar X\#\theta)\circ\bar X,
\end{align*}
see Lemma \ref{gf:lem.cov} in the appendix.
Then the probability density $\rho_t$ can be completely eliminated from the evolution equation,
leading to a closed system for $X$ alone,
\begin{align}
  \label{gf:1}
  \partial_tX_t = -\frac1\theta\frac{\delta\nrg^\#}{\delta X}(X_t).
\end{align}
Note that equation \eqref{gf:0} and \eqref{gf:1} are equivalent for smooth solutions with everywhere-positive $\rho_t$.

\subsection{Two gradient flow structures}
We briefly recall the basic terminology of gradient flows:
on a Riemannian manifold $\mf$ with a local scalar product $\langle\cdot,\cdot\rangle_x$,
the gradient flow of a function $F\in C^2(\mf)$ is formed by solutions $x_{(\cdot)}:[0,T]\to\mf$
to
\begin{align}
  \label{gf:mf}
  \frac{d}{dt}x_t = - \grd_\mf F(x_t),
\end{align}
where the gradient $\grd_\mf F(x)$ of $F$ at $x\in\mf$
is the unique element $v\in\mathrm{T}_x\mf$ in $\mf$'s tangent space at $x$
with $\langle v,w\rangle_x = \dff F(x)[w]$ for all $w\in\mathrm T_x\mf$.
With this definition of the gradient, it is clear that any solution $x_{(\cdot)}$ to \eqref{gf:mf}
descends in $F$'s potential landscape ``as fast as possible'' in the sense that
a curve $x_{(\cdot)}:[0,T]\to\mf$ is a solution if and only if at each instance of time $t\in[0,T]$,
its tangent vector $dx_t/dt\in\mathrm T_x\mf$ minimizes
among all $v\in\mathrm T_x\mf$ the following expression:
\begin{align}
  \label{gf:gfvar}
  \frac12\langle v,v\rangle_{x_t} + \dff F(x_t)[v].
\end{align}
There are two ways in which \eqref{gf:0} can be considered as a gradient flow.

The first is obvious from the Lagrangian formulation \eqref{gf:1},
which is directly identified as an \emph{$L^2$-gradient flow of $\nrg^\#$ on the space of Lagrangian maps}.
Here the role of the manifold $\mf$ is played by $L^2_\theta(\Theta;\Omega)$,
the linear space of measurable maps $X:\Theta\to\Omega$,
whose tangent space at any point consists of square integrable vector fields $\velo:\Theta\to\setR^d$,
and is equipped with the scalar product
\begin{align*}
  \langle\velo,\welo\rangle_\theta = \int_\Theta\velo\cdot\welo\,\theta\dd\xi.
\end{align*}
While this $L^2$-gradient flow structure is easily understood,
the second structure introduced in the seminal paper by Otto \cite{O2001} is more subtle:
\eqref{gf:0} is a \emph{metric gradient flow of $\nrg$ in the $L^2$-Wasserstein distance}.
Here, the role of the manifold $\mf$ is played by the space of probability measures $\prb(\Omega)$.
The rigorous construction of  the Wasserstein tangent space at some $\rho\in\prb(\Omega)$
amounts to identifying --- via the continuity equation --- tangent vectors with elements in the closure
of all gradient vector fields $\nabla\varphi:\Omega\to\setR^d$ with $\varphi\in C^\infty_c(\Omega)$  
in the $\rho$-weighted $L^2$-norm.  
Less rigorously and more intuitively:
if $(\rho_t)_{t\in[0,T]}$ is some sufficiently regular curve in $\prb(\Omega)$,
then at each $t\in[0,T]$, there is an essentially unique gradient vector field $\velo_t=\nabla\varphi_t$
such that $\partial_t\rho_t+\dv(\rho_t\velo_t)=0$.
Inside that framework, the analogue of the local scalar product between two tangent vectors,
identified with $\nabla\varphi$ and $\nabla\psi$, respectively, amounts to
\begin{align}
  \label{gf:ottocalculus}
  \langle\nabla\varphi,\nabla\psi\rangle_\rho 
  := \int_\Omega \nabla\varphi\cdot\nabla\psi\,\rho\dd x.
\end{align}
In this context, the $L^2$-Wasserstein distance $\wass(\rho^0,\rho^1)$ between $\rho^0,\rho^1\in\prb(\Omega)$
can be introduced as shortest connecting curve $(\rho^s)_{0\le s\le1}$ from $\rho^0$ to $\rho^1$,
\begin{align}
  \label{gf:W2aslength}
  \wass(\rho^0,\rho^1) = \inf_{(\rho^s)_{0\le s\le1}}
  \left\{\int_0^1\langle\nabla\psi^s,\nabla\psi^s\rangle_{\rho^s}\dd s\,
  \middle|\,\partial_s\rho^s+\dv(\rho^s\nabla\psi^s)=0\right\}.
\end{align}

With this dictionary at hand, it is now straight-forward to conclude
that \eqref{gf:0} is the analogue of \eqref{gf:mf},
and in particular, $\velo_t$ is identified with the gradient of $\nrg$ at $\rho_t$
with respect to the local scalar product \eqref{gf:ottocalculus}.
In analogy to the situation on the Riemannian manifold,
we define the gradient in the Wasserstein metric (with a certain abuse of notation) by
\begin{align}
  \label{gf:Wgrad}
  \grd_\wass\nrg = \nabla\frac{\delta\nrg}{\delta\rho},
\end{align}
so that \eqref{gf:0} becomes
\begin{align*}
  \partial_t\rho_t = \dv\big(\rho_t\grd_\wass\nrg(\rho_t)\big).
\end{align*}
We stress that this discussion is very formal.
The introduction of the $L^2$-Wasserstein distance $\wass$ via \eqref{gf:W2aslength} is full of technical subtleties, see \cite{BB00}.
Moreover, the functionals $\nrg$ of interest are far from being differentiable,
but typically just lower semi-continuous,
which makes it impossible to define a gradient in a simple way as in \eqref{gf:Wgrad}.

There are fully rigorous approaches to understanding solutions to \eqref{gf:0} as being of steepest descent,
the most prominent being the theory of metric gradient flows developed by Ambrosio et al \cite{gf-AGS}.
There, one completely avoids the scalar product \eqref{gf:ottocalculus}
and defines $\wass$ as a global metric on $\prb(\Omega)$.
%;we recall the relevant definitions in the appendix to this chapter
Accordingly, the metric gradient flow is \emph{not} formulated in differential terms,
but instead is characterized by variational principles that are formally related to \eqref{gf:gfvar} above,
that is, to consider curves $t\mapsto\rho_t$ in $\prb(\Omega)$
whose motion --- now measured in $\wass$ --- is such that it decreases the functional $\nrg$ as fast as possible.
This can be formulated in robust variational ways,
like the energy dissipation equality, or the evolutionary variational inequalities.

Unfortunately, this approach is very abstract,
and it has surprising limitations in view of applicability to concrete evolution equations of type \eqref{gf:0}.
While the non-linear Fokker-Planck equation \eqref{gf:FP} fits well into the framework of \cite{gf-AGS},
this is not the case for the fourth order equations \eqref{gf:TF}.
Therefore, we shall adopt a more practical view on Wasserstein gradient flows here,
which is centered around the question:
``What are the properties implied on solutions to \eqref{gf:0} by the theory,
and how can they be used for the design of numerical schemes?''

\section{Benefit from the gradient flow structures}
As indicated above, we shall not go into details of the theory of metric gradient flows
but only summarize results and techniques that are of interest for the design and analysis of numerical schemes.
 
\subsection{Existence of solutions by minimizing movements}
Knowing that \eqref{gf:0} is (at least formally) a gradient flow in the Wasserstein metric for a reasonable functional $\nrg$,
one almost automatically obtains the existence of curves $\rho_{(\cdot)}$ in $\prb(\Omega)$
that realize the principle of steepest descent indicated in \eqref{gf:gfvar}.
The general approach to their construction goes
via the celebrated variational form of the implicit Euler discretization in time,
commonly referred to as minimizing movement scheme.
More specifically, for a given time step size $\dt>0$,
a sequence $(\rho_\dt^n)_{n=0}^\infty$ of $\rho_\dt^n\in\prb(\Omega)$
such that $\rho_\dt^n$ approximates the density $\rho(n\dt)$ of the true solution at time $t=n\tau$
are obtained as follows:
one starts from the initial condition $\rho_\dt^0:=\rho_0$,
and then one defines each $\rho_\dt^n$ for $n=1,2,\ldots$ inductively as minimizer of
\begin{align}
  \label{gf:mm}
  \rho\mapsto\frac1{2\dt}\wass(\rho,\rho_\dt^{n-1})^2 + \nrg(\rho).
\end{align}
Note that \eqref{gf:mm} can be formally derived
by integrating \eqref{gf:gfvar} in time from $t=(n-1)\tau$ to $t=n\tau$ along a solution $\rho_{(\cdot)}$,
and approximating the integral over the metric term by the distance.
Unqiue solvability of \eqref{gf:mm} follows under the typical hypotheses from calculus of variations.

By abstract arguments, the time-interpolated $\rho_\dt^n$
have accumulation points in the space of continuous curves on $\prb(\Omega)$ for $\dt\to0$.
The difficulty is then to show that these limit curves are indeed solutions to \eqref{gf:0} in some sense.
This has been shown for the Fokker-Planck equation \eqref{gf:FP} and for the thin film and QDD equations \eqref{gf:TF},
see \cite{gf-JKO,gf-Otto2,gf-Otto1,gf-BCC,gf-BCC2,gf-GST,gf-MMS,gf-CFetc}. This method of approximation in time is so reliable that it is the basis for essentially all numerical schemes for gradient flows.
We mention that variational discretizations in time of higher order have been developed recently \cite{gf-LT,gf-MP,gf-Plazotta},
but are not yet widely used.
 
\subsection{Long time asymptotics from displacement convexity}
Some functionals $\nrg$ of interest
happen to be $\lambda$-uniformly displacement convex in the sense of McCann \cite{gf-McCann}:
that is, there is a $\lambda\in\setR$ such that 
for any ``reasonable'' unit speed geodesic $(\eta_s)_{s\in[0,\sigma]}$ in $(\prb(\Omega),\wass)$,
the real function $s\mapsto\nrg(\eta_s)$ has second derivative bounded below by $\lambda$.
This is true for instance for the entropy functional $\ent_{h,V,W}$ under the hypotheses that
$h$ satisfies the McCann condition:
\begin{enumerate}
\item $h$ is convex with $h(0)=0$, and $s\mapsto s^{d}h(s^{-d})$ is convex and non-increasing,
\item $W$ is convex, and
\item $V$ is $\lambda$-uniformly convex, that is $\nabla^2V\ge\lambda\eins$.
\end{enumerate}
For Wasserstein gradient flows of such functionals,
one obtains interesting consequences on the long-time asymptotics of solutions $\rho_{(\cdot)}$.

A first implication of $\lambda$-uniform displacement convexity is $\lambda$-uniform contractivity of the flow:
for any two solutions $\rho_{(\cdot)}$ and $\eta_{(\cdot)}$ the map
\begin{align}
  \label{gf:contract}
  t\mapsto e^{\lambda t}\wass(\rho_t,\eta_t)
\end{align}
is non-increasing in time.
Note that for $\lambda>0$, this implies mutual attraction of solutions at exponential rate in time,
for $\lambda<0$, it estimates the speed of divergence from each other.
In any case, monotonicity of \eqref{gf:contract} implies uniqueness.
If $\lambda>0$, then it follows from contractivity that \eqref{gf:0} has a unique stationary solution $\rho_*$,
which coincides with the unique minimizer of $\nrg$,
and any solution $\rho_{(\cdot)}$ to \eqref{gf:0} converges to $\rho_*$ exponentially fast:
\begin{align}
  \label{gf:decay}
  t\mapsto e^{\lambda t}\wass(\rho_t,\rho_*)
  \quad\text{and}\quad
  t\mapsto e^{\lambda t}\big(\nrg(\rho_t)-\nrg(\rho_*)\big)
\end{align}
are non-increasing in $t\ge0$.
The estimates in \eqref{gf:decay} have been the key to study the long-time asymptotics of various PDEs of type \eqref{gf:0},
see e.g. \cite{gf-Otto1,CMV2003,cmcv-06,gf-MMS,gf-CFetc,gf-BCC2}.

Preserving convexity is one of the central interests in designing numerical methods for \eqref{gf:0}.
If $\prb(\Omega)$ is approximated by a finite-dimensional metric space,
and $\nrg$ is discretized thereon such that it has the same modulus $\lambda$ of convexity,
then the discretized gradient flow is automatically asymptotic preserving in the sense that
solutions share the monotonicities \eqref{gf:contract} and \eqref{gf:decay}.
Moreover, from the computational perspective,
a very useful consequence of $\lambda$-uniform displacement convexity 
is that the minimization problem \eqref{gf:mm} is uniformly convex as well,
with modulus $\lambda':=\frac1\tau+\lambda$.

%$

\subsection{Polyconvexity from displacement convexity}
A benefit from the gradient flow structure of \eqref{gf:0} in $\wass$ 
is that the evolution \eqref{gf:1} for the Lagrangian map $X$ is a gradient flow as well:
one replaces the space of probability measures with the intricate Wasserstein distance
by a space of maps, equipped with the much easier $L^2$-structure.
The construction of solutions via minimizing movements carries over from \eqref{gf:mm}:
a sequence of maps $(X_\tau^n)_{n=0}^\infty$ is inductively obtained by minimizing
\begin{align}
  \label{gf:mmL}
  \frac1{2\dt}\|X-X_\dt^{n-1}\|_{L^2_\theta}^2 + \nrg^\#(X).
\end{align}
In contrast to \eqref{gf:mm}, this problem is much easier to solve in practice,
since it does not involve the calculation of the Wasserstein distance,
but only of an $L^2$-norm.

There is, however, a price to pay.
An obvious drawback is that the transformed functionals $\nrg^\#$
typically attain a much more complicated form than their respective originals $\nrg$.
For instance, in the comparatively easy case of the relative entropy $\ent_{h,V,W}$ from \eqref{gf:FPenergy},
one obtains
\begin{align}
  \label{gf:FPehash}
  \begin{split}
    \ent_{h,V,W}^\#(X)
    &= \int_\Theta\left[
      h^\#\left(\frac{\det\dff X}\theta\right)
      + V(X)\right]\theta\dd\xi \\
    &\quad + \int_\Theta\int_\Theta W\big(X(\xi)-X(\xi')\big)\theta(\xi)\theta(\xi')\dd\xi\dd\xi',
  \end{split}
\end{align}
with the definition
\begin{align}
  \label{gf:hhash}
  h^\#(s) = s h(s^{-1}).
\end{align}
Another, more subtle difficulty arises in any space dimension $d>1$:
the correspondence $X\mapsto X\#\theta$ is \emph{not} an isometry between
the $L^2$-distance on injective monotone maps and the Wasserstein distance on densities.
In fact, that correspondence is highly non-unique
--- for any given sufficiently regular $\rho$,
there are infinitely many genuinely different maps $X$ such that $X\#\theta=\rho$ ---  
and there is no ``universal normalization'' of the $X$'s
such that $\wass(X_0\#\theta,X_1\#\theta)=\|X_0-X_1\|_{L^2(\theta)}$ would be true in general.
In particular, the linear interpolation between $X_0$ and $X_1$ has typically little to do
with the Wasserstein geodesic connecting $X_0\#\theta$ to $X_1\#\theta$.
Therefore, displacement convexity of $\nrg$ does usually not imply any flat convexity of $\nrg^\#$,
and contractivity of the gradient flow \eqref{gf:0} with respect to $\wass$
does not imply contractivity of \eqref{gf:1} with respect to $L^2$.
What remains from displacement convexity is polyconvexity: 
in the situation above, the functional in \eqref{gf:FPehash} is indeed polyconvex
if the corresponding $\ent_{h,V,W}$ is $\lambda$-uniformly displacement convex with $\lambda\ge0$. 
We remark that the correspondence between \eqref{gf:0} and \eqref{gf:1} has first been pointed out
--- at least in the special case of the Fokker-Planck equation \eqref{gf:FP} --- by Evans et al \cite{gf-EGS}, where
they also propose to solve \eqref{gf:1} by the implicit time discretization \eqref{gf:mmL},
based on the polyconvexity of the functional $\nrg^\#$,
It was then shown later \cite{gf-ALS} that, at least for $\nrg=\ent_{h,0,0}$,
there is indeed a one-to-one correspondence between the variational problems \eqref{gf:mm} and \eqref{gf:mmL},
i.e., under suitable hypotheses on the initial condition $\rho_0$,
the discrete iterates coincide, $\rho_\tau^n=X_\tau^n\#\rho_0$. This fact was later used for constructing maps 
joining particular densites with given Jacobians, see \cite{CL}.
\medskip

\textbf{In summary:}
The gradient flow structures lead to natural time-discretizations of \eqref{gf:0}
that are in variational form, see either equation \eqref{gf:mm} or equation \eqref{gf:mmL}.
These variational problems are even strictly geodesically convex or polyconvex, respectively,
if $\nrg$ happens to be $\lambda$-uniformly displacement convex in the sense of McCann.
Preserving that convexity also under spatial discretization leads to schemes
that replicate contractivity \eqref{gf:contract} and convergence to equilibrium \eqref{gf:decay},
thus reproducing the correct long time asymptotics.
Moreover, discretizing the Lagrangian form \eqref{gf:1} of the dynamics
automatically ensures properties like mass conservation and non-negativity.

% Despite these theoretical drawbacks, the reformulation \eqref{gf:1} as an $L^2$ gradient flow on a space of Lagrangian maps
%  provides a conceptually beautiful and numerically feasable alternative approch to solution of \eqref{gf:0}. 

\section{1D Wasserstein using the inverse distribution functions}
In space dimension $d=1$ the conceptual difference between
performing a minimization in \eqref{gf:mm} on a discretized set of Lagrangian maps,
and 
discretizing the Lagrangian equations directly
is little.
The reason is the isometry between the Wasserstein space over an interval $I=[a,b]\subset\setR$,
and the $L^2$-space of inverse distribution functions with values in $I$.

\subsection{The inverse distribution function}
We recall the basic definitions and relations:
\begin{itemize}
\item To each $\rho\in\prb(I)$, one associates the cumulative distribution function $F_\rho:I\to\Theta$ via $F_\rho(x)=\rho[[a,x]]$,
  where $\Theta=[0,1]$.
  By outer regularity, $F_\rho$ is non-decreasing and \cadlag.
  It is strictly increasing on the support of $\rho$, and it is continuous if $\rho$ is an absolutely continuous measure.
\item By the usual construction, one obtains a unique non-decreasing \cadlag\ right inverse $X_\rho:\Theta\to I$ of $F_\rho$,
  called $\rho$'s \emph{inverse distribution function}.
  $X_\rho$ is a genuine inverse if $\rho$ is absolutely continuous and has support $I$.
\item The definition directly implies that, with $\theta$ being the Lebesgue measure on $\Theta=[0,1]$,
  \begin{align}
    \label{gf:X2rho}
    \rho = X_\rho\#\theta,
  \end{align}
  and $X_\rho$ is the only non-decreasing \cadlag\ function $X:\Theta\to I$ with that property.
  Consequently, if $\rho$ is absolutely continuous with an everywhere positive and continuous density function,
  then $X_\rho$ is continuously differentiable with
  \begin{align}
   \rho\circ X_\rho = \frac1{\partial_\xi X_\rho}.
  \end{align}
  This relation generalizes in the obvious way when $\rho$'s density has isolated points of discontinuity.
\item The association $\rho\mapsto X_\rho$ is an isometry in the following sense:
  \begin{align*}
    \wass (\rho,\eta) = \|X_\rho-X_\eta\|_{L^2([0,1])}.
  \end{align*}
  Particularly, the space $(\prb(I),\wass)$ is flat,
  and Wasserstein geodesics are given through linear interpolation of the respective inverse distribution functions.
\end{itemize}
The following equivalence is obvious:
if a sequence $(\rho_\dt^n)_{n=0}^\infty$ of $\rho_\dt^n\in\prb(I)$ is minimal for \eqref{gf:mm},
then the sequence $(X_\dt^n)_{n=0}^\infty$ of respective inverse distribution functions $X_\tau^n:=X_{\rho_\tau^n}$ is minimal in \eqref{gf:mmL};
and if a sequence $(X_\dt^n)_{n=0}^\infty$ of non-decreasing \cadlag functions $X_\dt^n:\Theta\to I$ is minimal in \eqref{gf:mmL}, 
then the sequence $(\rho_\dt^n)_{n=0}^\infty$ of respective densities $\rho_\dt^n:=X_\dt^n\#\theta$ is minimal in \eqref{gf:mm}.
Moreover, if $\nrg$ is $\lambda$-uniformly displacement convex, then the minimization problem \eqref{gf:mmL} is convex of modulus $\frac1\tau+\lambda$. In fact contraction estimates in one dimension can be obtained in this formulation \cite{CT2004,LT2004,gf-BCC}. This formulation can also be used to obtain well-posedness of solutions in the theoretical setting \cite{CHR} even when blow-up of the densities can occur.

\subsection{Discretization}
Lagrangian numerical schemes for solution of the one-dimensional Fokker-Planck, thin film and QDD equations
have been devised by various authors, see e.g. \cite{gf-GT1,gf-GT2,gf-BCC,gf-WW,gf-CN,CRW2016}.
Below, we review the ansatz made in \cite{gf-MOfp,gf-MOtf,gf-MOdlss,gf-Osberger}.

As ansatz space $\ansatz$ for the inverse distribution functions $X$,
we choose the continuous and strictly increasing functions $X:\Theta\to I$
that are piecewise linear with respect to a given partition $\bfxi=(\xi_k)_{k=0}^K$ of $\Theta$,
\begin{align*}
  0=\xi_0 < \xi_1 < \cdots < x_K=1,
\end{align*}
More explicitly, elements $X\in\ansatz$ are in one-to-one correspondence to partitions $\bfx=(x_k)_{k=0}^K$ of $I$
with
\begin{align*}
  a=x_0<x_1<\cdots<x_K=b,
\end{align*}
by means of
\begin{align*}
  X = \xtoX[\bfx] := \sum_{k=0}^K x_k\phi_k,
\end{align*}
where the $\phi_k:\Theta\to\setR$ are the usual hat functions,
with $\phi_k(\xi_k)=1$, and $\phi_k(\xi_\ell)=0$ for $\ell\neq k$.
The associated probability density $\rho\in\prb(I)$ with $X_\rho=X$ is piecewise constant,
given by
\begin{align*}
  \rho = \xtorho[\bfx] := \sum_{k=1}^K \rho_k\indy_{(x_{k-1},x_k)},
  \quad\text{with respective values}\quad
  \rho_k:=\frac{x_k-x_{k-1}}{\xi_k-\xi_{k-1}}.
\end{align*}
For the $L^2$-norm of a difference $X-X'$ for $X,X'\in\ansatz$, one obtains
\begin{align*}
  \|X-X'\|_{L^2([0,1])}^2 = \langle \bfx-\bfx',\bfx-\bfx'\rangle_{L^2_\bfxi}
  \quad\text{with}\quad
  \langle v,v\rangle_{L^2_\bfxi} := v^TA_\bfxi v
\end{align*}
for each $v\in\setR^{K+1}$, where $A_\bfxi\in\setR^{(K+1)\times(K+1)}$ is the well-known tri-diagonal stiffness matrix.
For practical purposes, $A_\bfxi$ can even be replaced by its canonical diagonal approximation without significant harm for the numerical results.

Now let $\nrg^\#_\bfxi$ be some approximation of $\nrg^\#$ on partitions $\bfx\in\setR^{K+1}$ of $I$. 
For instance, if $\nrg$ depends on $\rho$, but not on its derivatives,
then one may choose $\nrg^\#_\bfxi(\bfx) = \nrg^\#(\xtoX[\bfx])$.
The gradient flow of $\nrg^\#_\bfxi$ with respect to the inner product $\langle\cdot,\cdot\rangle_{L^2_\bfxi}$
is 
\begin{align}
  \label{gf:dWgrad}
  -\dot\bfx_t = \grd_\bfxi\nrg^\# := A_\bfxi^{-1}\left(\frac{\partial}{\partial x_k} \nrg^\#_\bfxi\right)_{k=0}^K.
\end{align}

\subsection{Discretizing the Fokker-Planck equation}
We consider $\nrg$ of the form \eqref{gf:FPenergy}.
Here one can directly evaluate \eqref{gf:mmL} on the ansatz space $\ansatz$,
which yields, recalling \eqref{gf:FPehash}:
\begin{align*}
  \ent_{h,V,W}^\#(\xtoX[\bfx]) 
  &= \sum_{k=1}^K(\xi_k-\xi_{k-1})\left[h(z_k) + \aint_{x_{k-1}}^{x_k}V(x)\dd x\right] \\
  &\quad + \sum_{k,\ell=1}^K(\xi_k-\xi_{k-1})(\xi_\ell-\xi_{\ell-1})\aint_{x_{k-1}}^{x_k}\aint_{x_{\ell-1}}^{x_\ell}W(x-y)\dd x\dd y
\end{align*}
For practical purposes, a sufficiently precise approximation is
\begin{align*}
  \big[\ent_{h,V,W}^\#\big]_\bfxi(\bfx)
  &:= \sum_{k=1}^K(\xi_k-\xi_{k-1})\left[h(z_k) + V\left(\frac{x_k+x_{k-1}}2\right)\right] \\
  &\quad + \sum_{k,\ell=1}^K(\xi_k-\xi_{k-1})(\xi_\ell-\xi_{\ell-1})W\left(\frac{x_k-x_\ell+x_{k-1}-x_{\ell-1}}2\right).
\end{align*}
We remark that this approximation preserves the modulus of convexity.

On basis of this,
a fully discrete Lagrangian scheme for solution of \eqref{gf:FP} has been developed in \cite{gf-MOfp,gf-MS},
by additionally discretizing \eqref{gf:dWgrad} in time via the implicit Euler method,
\begin{align}
  \label{gf:001}
  \frac{\bfx^n-\bfx^{n-1}}{\dt} = -\grd_\xi\big[\ent_{h,v,W}^\#\big]_\xi(\bfx^n).
\end{align}
The fully discrete solutions are well-defined since the $\bfx^n$ can be obtained inductively by solving minimization problems.
In \cite{gf-MOfp,gf-MS}, convergence of the scheme has been shown:
\begin{theorem}
  \label{gf:thm.fpconverge}
  Consider a sequence of spatial meshes $\bfxi^{(j)}$ and time steps $\dt^{(j)}$,
  such that $\max_k(\xi^{(j)}_k-\xi^{(j)}_{k-1})\to0$ and $\dt^{(k)}\to0$ as $k\to\infty$,
  while $\max_k(\xi^{(j)}_k-\xi^{(j)}_{k-1})/\min_k(\xi^{(j)}_k-\xi^{(j)}_{k-1})$ remains bounded.
  Let initial data $\bfx^{(j)}_0$ be given such that $\xtorho[\bfx^{(j)}_0]\to\rho_0$ in $L^1(I)$,
  and $\big[\ent_{h,V,W}^\#\big]_{\bfxi^{(j)}}(\bfx^{(j)}_0)$ remains bounded.

  Then the piecewise constant interpolations $\bar\rho^{(j)}:[0,T]\to\prb(I)$,
  obtained from the fully discrete solutions to \eqref{gf:001}
  via $\rho^{(j)}(t,\cdot)=\xtorho\big([\bfx^{(j)}]^n\big)$ for $t\in((n-1)\dt,n\dt)$
  converge strongly in $L^1([0,T]\times I)$ to the unique weak solution of \eqref{gf:FP}
  with initial datum $\rho_0$.
\end{theorem}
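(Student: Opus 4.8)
The plan is to follow the standard route for convergence of minimizing-movement (JKO-type) schemes, adapted to the Lagrangian variables: first derive mesh- and step-uniform a priori estimates from the variational structure, then extract a convergent subsequence of densities by a compactness argument, pass to the limit in the discrete Euler--Lagrange equations to identify the limit as a weak solution, and finally invoke uniqueness to upgrade subsequential to full convergence.

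First I would exploit that each iterate $\bfx^n$ minimizes the functional in \eqref{gf:mmL} with $\nrg^\#$ replaced by $\big[\ent_{h,V,W}^\#\big]_\bfxi$. Comparing the minimum value at $\bfx^n$ with the value at the competitor $\bfx^{n-1}$ yields the one-step dissipation inequality $\tfrac1{2\dt}\|\bfx^n-\bfx^{n-1}\|_{L^2_\bfxi}^2 + \big[\ent_{h,V,W}^\#\big]_\bfxi(\bfx^n) \le \big[\ent_{h,V,W}^\#\big]_\bfxi(\bfx^{n-1})$. Since $I$ is bounded and $V,W\in C^2$, the discrete energy is bounded below uniformly in the mesh; telescoping the inequality therefore gives a uniform bound on the total energy and, crucially, the summability estimate $\sum_n \tfrac1{\dt}\|\bfx^n-\bfx^{n-1}\|_{L^2_\bfxi}^2 \le C$. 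Because of the one-dimensional isometry between $\wass$ and the $L^2$-distance of inverse distribution functions, this translates into equicontinuity of the piecewise-constant-in-time curves $\bar\rho^{(j)}$ in the Wasserstein metric, namely a $\tfrac12$-Hölder bound $\wass(\bar\rho^{(j)}(s),\bar\rho^{(j)}(t)) \le C|t-s|^{1/2} + C\dt^{1/2}$, uniformly in $j$.

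Compactness then proceeds in two steps. The uniform energy bound controls the entropy term $\sum_k (\xi_k-\xi_{k-1}) h(z_k)$, which yields equi-integrability of the densities $\xtorho[\bfx^n]$ and hence, together with the quasi-uniformity of the meshes (which keeps the stiffness matrix $A_\bfxi$ well-conditioned and the discrete norms comparable to genuine $L^2$-norms), weak $L^1$-compactness in space. To upgrade this to strong $L^1$-compactness --- which is indispensable for passing to the limit in the nonlinear diffusion $\Phi(\rho)$ --- I would derive one additional, mesh-uniform spatial regularity estimate, e.g.\ via a flow-interchange / second-entropy estimate in the spirit of \cite{gf-MMS}: differentiating a secondary entropy along the scheme produces a uniform bound on a discrete gradient of $\Phi(\rho)$, hence a bound in a space compactly embedded in $L^1(I)$. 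Combining this spatial compactness with the Wasserstein equicontinuity in time through a generalized Aubin--Lions--Simon argument then delivers strong $L^1([0,T]\times I)$ convergence of a subsequence of $\bar\rho^{(j)}$ to some limit $\rho$. The hard part will be precisely this second compactness step: the energy estimate alone only gives weak $L^1$-compactness, which is insufficient because $\Phi$ is nonlinear, so one must produce a discrete spatial regularity estimate that is uniform in both the mesh and the time step, stable under refinement, and strong enough to feed the Aubin--Lions argument; the flow-interchange computation, which hinges on displacement convexity of the auxiliary functional along the discrete flow and on careful control of the discrete geometry, is where the real work lies.

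It then remains to identify $\rho$ as a weak solution. Writing the optimality condition for \eqref{gf:001} in weak (Euler--Lagrange) form and testing against smooth functions, one passes to the limit in each term separately: consistency of the midpoint quadratures defining $\big[\ent_{h,V,W}^\#\big]_\bfxi$ guarantees that the discrete drift and interaction contributions converge to $\dv(\rho\nabla V)$ and $\dv(\rho\,(\rho\ast\nabla W))$, while the strong convergence of $\rho$ allows passage to the limit in the diffusion term $\Delta\Phi(\rho)$; the no-flux boundary conditions \eqref{e:bc} emerge naturally from the variational formulation. Finally, because $\ent_{h,V,W}$ is $\lambda$-uniformly displacement convex under McCann's conditions, the weak solution of \eqref{gf:FP} is unique (the uniqueness from contractivity recalled around \eqref{gf:contract}), so the identified limit is independent of the subsequence and the entire sequence $\bar\rho^{(j)}$ converges.
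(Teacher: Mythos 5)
The paper itself does not prove this theorem: it states it as a result imported from \cite{gf-MOfp,gf-MS} (with the CFL condition of the original proof later removed in \cite{gf-Odiss}), so there is no in-paper argument to compare against. Your outline — one-step dissipation from the minimization \eqref{gf:mmL}, the resulting $1/2$-H\"older equicontinuity in $\wass$ via the one-dimensional isometry, a flow-interchange estimate in the spirit of \cite{gf-MMS} to gain the spatial compactness needed for the nonlinear term $\Phi(\rho)$, a generalized Aubin--Lions argument, and passage to the limit in the discrete Euler--Lagrange equations of \eqref{gf:001} — is exactly the strategy of those cited works, and you correctly identify the flow-interchange step as the technical crux. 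The only caveat is your final appeal to $\lambda$-uniform displacement convexity for uniqueness: McCann's conditions are not among the hypotheses of the theorem, and in the original proofs uniqueness of the weak solution of \eqref{gf:FP} is instead taken from the standard one-dimensional theory for this class of drift-diffusion equations.
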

The original statement in \cite{gf-MOfp} contained a CFL condition;
it was later shown \cite{gf-Odiss} that it is not necessary. Furthermore they proved that 
the discrete solutions satisfy the same maximum and minimum principles as well as the correct contraction estimate \eqref{gf:contract}.

\subsection{Discretizing fourth order equations}
For the functionals $\nrg$ in \eqref{gf:TFenergy} that contain gradients of $\rho$,
the respective
\begin{align}
  \label{gf:FDtilde}
  \nrg_\text{Fisher}^\#(X) = \int_0^1\left|\partial_\xi\left(\frac1{\partial_\xi X}\right)\right|^2\dd\xi
  \quad\text{and}\quad
  \nrg_\text{Dirichlet}^\#(X) = \int_0^1\left|\partial_\xi\left(\frac1{\sqrt{\partial_\xi X}}\right)\right|^2\dd\xi
\end{align}
contain second derivatives of $X$ and hence cannot be directly evaluated on $\ansatz$.
Appropriate surrogates are needed that can be defined on piecewise linear inverse distribution functions $X$'s.

The ansatz that has been taken in \cite{gf-MOtf,gf-MOdlss} is based on very particular relations
of the Dirichlet energy and Fisher information to entropy functional,
which we state here in more general terms:
consider
\begin{align*}
  \ent_1(\rho) = \int_\Omega\rho\log\rho\dd x
  \quad\text{and}\quad
  \ent_{2}(\rho) = \int_\Omega \rho^{2}\dd x,
\end{align*}
respectively.
Then, recalling the definition of $\grd$ in \eqref{gf:Wgrad},
one has at any given positive and smooth $\rho\in\prb(\Omega)$:
\begin{align*}
  \nrg_\text{Fisher}(\rho)
  &= \big\langle\grd\ent_1(\rho),\grd\ent_1(\rho)\big\rangle_\rho\\
  %\quad\text{and}\quad\\
  \nrg_\text{Dirichlet}(\rho)
    &= \big\langle\grd\ent_1(\rho),\grd\ent_{2}(\rho)\big\rangle_\rho.
\end{align*}
This observation motivates to \emph{define} the discrete surrogates for the functionals in \eqref{gf:FDtilde}
by means of these relations.
We recall the definition of the discrete gradient from \eqref{gf:dWgrad}, that is
\begin{align}
  \label{gf:dTFenergy}
  \big(\nrg_\text{Fisher}\big)^\#_\bfxi &:= \langle\grd_\bfxi\ent_1^\#(\xtoX),\grd_\bfxi\ent_1^\#(\xtoX)\rangle_{L^2_\bfxi},\\
  %\quad\text{and}\quad
  \big(\nrg_\text{Dirichlet}\big)^\#_\bfxi &:= \langle\grd_\bfxi\ent_1^\#(\xtoX),\grd_\bfxi\ent_{2}^\#(\xtoX)\rangle_{L^2_\bfxi}.  
\end{align}
In analogy to \eqref{gf:001},
fully discrete Lagrangian schemes for solution of the QDD and the thin film equation have been developed,
\begin{align}
  \label{gf:002}
  \frac{\bfx^n-\bfx^{n-1}}{\dt} &= -\grd_\xi\big[\nrg_{Dirichlet}^\#\big]_\xi(\bfx^n),\\
 % \quad\text{and}\quad
  \frac{\bfx^n-\bfx^{n-1}}{\dt} &= -\grd_\xi\big[\nrg_{Fisher}^\#\big]_\xi(\bfx^n),
\end{align}
respectively.
In the convergence analysis, the precise form of the discretized Dirichlet energy and Fisher information play a decisive role.
Namely, it follows from abstract considerations that $\ent_1^\#(\xtoX)$ is a Lyapunov functional for both discretizations,
and that its dissipation provides the necessary a priori estimates.
We state the final result for the QDD equation; the statement for the thin film equation is completely analogous.
\begin{theorem}
  Assume the same hypotheses as in Theorem \ref{gf:thm.fpconverge} are satisfied,
  only that the spatial discretizations $\bfxi^{(j)}$ are equi-distant,
  and that $\big[\nrg_{Fisher}^\#\big]_{\bfxi^{(j)}}(\bfx^{(j)}_0)$ remains bounded.
  Construct the piecewise constant in time and space approximations $\rho^{(j)}:[0,T]\to\prb(I)$ as before
  from the fully discrete solutions to \eqref{gf:002}.
  Then $\rho^{(j)}$ converges uniformly on $[0,T]\times I$ to a weak solution of the QDD equation in \eqref{gf:TF}
  with initial datum $\rho_0$.
\end{theorem}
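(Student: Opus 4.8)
The plan is to follow the standard three-step programme for convergence of minimizing-movement schemes: derive $j$-uniform a priori estimates from the dissipation structure, extract a convergent subsequence by compactness, and pass to the limit in the discrete weak formulation to identify a weak solution; uniqueness of weak solutions for the QDD equation then upgrades subsequential to full convergence. The starting point is that \eqref{gf:002} is the implicit-Euler, hence minimizing-movement, discretization of the $L^2_\bfxi$-gradient flow of the surrogate $\big[\nrg_{Fisher}^\#\big]_\bfxi$, so each step $\bfx^n$ minimizes $\bfx\mapsto\frac1{2\dt}\langle\bfx-\bfx^{n-1},\bfx-\bfx^{n-1}\rangle_{L^2_\bfxi}+\big[\nrg_{Fisher}^\#\big]_\bfxi(\bfx)$. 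This immediately yields the two classical discrete inequalities: monotonicity of the energy, $\big[\nrg_{Fisher}^\#\big]_\bfxi(\bfx^n)\le\big[\nrg_{Fisher}^\#\big]_\bfxi(\bfx^{n-1})$, and, after telescoping, the total-square-velocity bound $\sum_n\dt\,\langle\dot\bfx,\dot\bfx\rangle_{L^2_\bfxi}\le\big[\nrg_{Fisher}^\#\big]_\bfxi(\bfx^0)$. Through the isometry $\bfx\mapsto\xtorho[\bfx]$ the latter is precisely equicontinuity in time of $\rho^{(j)}$ in the Wasserstein metric.

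Second, I would produce the decisive spatial estimate by exploiting the relation $\nrg_{Fisher}(\rho)=\langle\grd\ent_1(\rho),\grd\ent_1(\rho)\rangle_\rho$ recorded above, which identifies the Fisher information as the squared Wasserstein-gradient norm of the Boltzmann entropy $\ent_1$. Concretely I would run the flow-interchange technique of \cite{gf-MMS}: using the discrete heat flow, that is the $L^2_\bfxi$-gradient flow of $\ent_1^\#$, as an auxiliary competitor in the minimizing-movement inequality defining $\bfx^n$, one obtains at each step $\dt\,\mathcal D^n\le\ent_1^\#(\bfx^{n-1})-\ent_1^\#(\bfx^n)$, where $\mathcal D^n$ denotes the dissipation of the Fisher information along the heat flow. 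Summing telescopes the right-hand side to $\ent_1^\#(\bfx^0)-\inf\ent_1^\#$, a $j$-uniform bound; this single inequality is the source of both claims made in the text, namely that $\ent_1^\#$ is a Lyapunov functional (the right-hand side is non-negative) and that its dissipation supplies the a priori estimates. Since $\mathcal D^n$ is a coercive higher-order spatial quantity, this delivers the crucial spatial regularity, morally an $L^2_tH^2_x$-type control of $\sqrt{\rho^{(j)}}$, and the estimates are moreover engineered to furnish the uniform lower and upper bounds on $\rho^{(j)}$ needed to keep $\log\rho^{(j)}$ and $\sqrt{\rho^{(j)}}$ well defined.

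Third, I would combine the time-equicontinuity with the spatial compactness in a refined Aubin--Lions/Arzel\`a--Ascoli argument to extract a subsequence of $\rho^{(j)}$ converging uniformly on $[0,T]\times I$ to a limit $\rho$, inheriting positivity and regularity from the a priori bounds. It then remains to pass to the limit in the discrete Euler--Lagrange identity associated with \eqref{gf:002}, tested against smooth functions, and to verify that the discrete operator $\grd_\bfxi\big[\nrg_{Fisher}^\#\big]_\bfxi$ is consistent with the continuous fourth-order operator $\nabla\frac{\Delta\sqrt\rho}{\sqrt\rho}$ driving the QDD equation; the strong convergence furnished by the second step is what permits the passage in this nonlinear term.

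The hard part is precisely this final consistency-and-limit step. Because the surrogate $\big[\nrg_{Fisher}^\#\big]_\bfxi$ is defined only indirectly, as the $L^2_\bfxi$-squared norm of the discrete gradient of $\ent_1^\#$, its variation cannot be read off directly; I would have to expand $\grd_\bfxi\big[\nrg_{Fisher}^\#\big]_\bfxi$ through the chain $\grd_\bfxi\langle\grd_\bfxi\ent_1^\#,\grd_\bfxi\ent_1^\#\rangle_{L^2_\bfxi}$, carefully tracking the tridiagonal matrix $A_\bfxi$ and its inverse, and show by discrete calculus that it reproduces the correct differential operator as the mesh size tends to zero. The attendant difficulty, absent in the second-order Theorem \ref{gf:thm.fpconverge}, is that the fourth-order problem admits no maximum principle, so the uniform lower bound on $\rho^{(j)}$, indispensable for giving meaning to $\log\rho$ and $\sqrt\rho$ and hence to the whole construction, must be extracted from the entropy and Fisher estimates rather than from a comparison principle, and this positivity must be shown to survive in the limit.
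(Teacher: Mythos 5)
Your overall strategy is the one the paper indicates: the theorem is quoted from \cite{gf-MOdlss} without proof, and the only hint given is that the specific definition of $\big[\nrg_{\text{Fisher}}^\#\big]_\bfxi$ as $\langle\grd_\bfxi\ent_1^\#,\grd_\bfxi\ent_1^\#\rangle_{L^2_\bfxi}$ is engineered precisely so that $\ent_1^\#$ is a Lyapunov functional whose dissipation yields the a priori estimates. Your flow-interchange step with the discrete heat flow, the inequality $\dt\,\mathcal D^n\le\ent_1^\#(\bfx^{n-1})-\ent_1^\#(\bfx^n)$, the classical energy/total-square-velocity bounds, and the Aubin--Lions/Arzel\`a--Ascoli extraction are exactly the skeleton of the actual argument.

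There is, however, one step that would fail as you have written it. You assert that a \emph{uniform positive lower bound} on $\rho^{(j)}$ is ``indispensable'' and ``must be extracted from the entropy and Fisher estimates.'' No such bound follows from those estimates, and none is available: bounded discrete Fisher information gives a $j$-uniform $H^1$ bound on $\sqrt{\rho^{(j)}}$, hence uniform H\"older continuity and a uniform \emph{upper} bound on $\rho^{(j)}$, but densities with bounded Fisher information may degenerate to zero (e.g.\ $\rho(x)\sim(x-x_0)^2$ has $\partial_x\sqrt\rho$ bounded). The way out --- and the way the cited convergence proof actually proceeds --- is to never need the lower bound: one rewrites the QDD equation in a weak form involving only $\sqrt\rho$ (or $\rho$) and its spatial derivatives integrated against test functions, so that $\log\rho$ and $1/\rho$ never appear in the limit identity; positivity of the discrete densities is automatic from strict monotonicity of $X\in\ansatz$ and need not survive uniformly in the limit. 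Relatedly, your final appeal to ``uniqueness of weak solutions of the QDD equation'' to upgrade subsequential to full convergence is not available in the required generality --- which is why the theorem claims convergence to \emph{a} weak solution, in contrast to Theorem~\ref{gf:thm.fpconverge}. With those two corrections (reformulate the limit equation in terms of $\sqrt\rho$; content yourself with subsequential identification of limits as weak solutions), your plan coincides with the paper's.
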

A variant of the definitions in \eqref{gf:dTFenergy} has been used by Osberger \cite{gf-Osberger}
to approximate solutions to the free boundary problems, i.e., $I=\setR$ and $\rho_0$ is compactly supported.
There, it is shown that the discrete solution converges at the expected rate towards self-similarity.

%%%%%%%%%%%%%%%%%%%%%%%%%%%%%%%%%%

\section{Multi-D: first discretize, then optimize}
In this section, we focus on Lagrangian discretizations of \eqref{gf:0} on the two-dimensional box $\Omega=[0,1]^2$
for Fokker-Planck equations without interaction term,
\begin{align}\label{fpnonlinear}
  \partial_t\rho_t = \Delta\Phi(\rho_t) + \dv(\rho_t\,\nabla V),
\end{align}
with $V\in C^2(\Omega)$.
The three methods that we review extend to more general convex domains
and to higher space dimensions $d>2$ without any conceptually new ideas;
the choice $\Omega=[0,1]^2$ is mainly made to enhance readability.
The question of generalization of the methods to flows of type \eqref{gf:0} other than \eqref{fpnonlinear} is more subtle,
and will be discussed for each methods individually.

\subsection{Moving triangle meshes}
First, we review the ansatz made in \cite{gf-CDMM}, which is the most straight-forward generalization
of the approach via the inverse distribution function to a Lagrangian method in multiple dimensions.
We assume that on the reference domain $\Theta:=\Omega=[0,1]^2$,
a triangulation with $K$ verticies $\xi_k$ is given, which we denote (by abuse of notation) by $\bfxi$.
We write $(k,\ell,m)\in\bfxi$ if $\xi_k$, $\xi_\ell$ and $\xi_m$ are --- in that order ---
the vertices of a positively oriented triangle in $\bfxi$,
and we denote by $\Delta(\xi_k,\xi_\ell,\xi_m)\subset\Theta$ the geometric domain of the triangle with these corners.
Our ansatz space $\ansatz$ for the Lagrangian maps is that of continuous $X:\Theta\to\Omega$
that are piecewise linear on the triangles of $\bfxi$.
That is, $X$ maps each reference triangle in $\Theta$ linearly to an image triangle in $\Omega$.
We further assume that $X$ moves vertices on the boundaries only laterally,
and consequently, it fixes the corners of $\Omega$.
In analogy to the one-dimensional situation,
there is a one-to-one correspondence between $X\in\ansatz$
and the vector $\bfx=(x_k)_{k=1}^K$ of images $x_k=X(\xi_k)$ of the vertices $\xi_k$ of $\bfxi$,
which is again realized by a map $\xtoX:(\setR^2)^K\to\ansatz$ with
\begin{align}\label{e:fem}
  \xtoX[\bfx] = \sum_{k=1}^Kx_k\phi_k,
\end{align}
with $\phi_k:\Theta\to\setR$ the piecewise linear hat function with $\phi_k(\xi_k)=1$ and $\phi_k(\xi_\ell)=0$ for $k\neq\ell$.
Note that $\xtoX[\bfx]$ is globally injective if and only if
the image triangles formed by $\bfx$ (with the combinatorics from $\bfxi$) do not overlap;
this is the natural two-dimensional generalization of the $\bfx$'s monotonicity in $d=1$.

For simplicity, we assume that the reference measure $\theta$ is the Lebesgue measure.
It follows that the image density $\rho=\xtorho[\bfx]$ is again piecewise constant:
\begin{align*}
  \xtorho[\bfx] &= \sum_{(\xi_k,\xi_\ell,\xi_m)\in\bfxi} \rho_{k,\ell,m}\indy_{\Delta(x_k,x_\ell,x_m)}
  \quad \text{with} \\%\quad
  \rho_{k,\ell,m } &= \frac{|\Delta(\xi_k,\xi_\ell,\xi_m)|}{|\Delta(x_k,x_\ell,x_m)|}
  = \frac{\det(x_\ell-x_k|x_m-x_k)}{\det(\xi_\ell-\xi_k|\xi_m-\xi_k)}.
\end{align*}
The induced scalar product amounts to
\begin{align*}
  \langle \velo,\welo\rangle_{L^2_\bfxi} = \velo^TA_\bfxi\welo,
\end{align*}
where the $(k,\ell)$-entry of $A_\bfxi$ is given by $\int_\Theta \phi_k(\xi)\phi_\ell(\xi)\dd\xi$.
For the induced energy, we obtain
\begin{align*}
  \ent_{h,V,0}^\#(\xtoX[\bfx])
  = \sum_{(k,\ell,m)\in\bfxi}|\Delta(\xi_k,\xi_\ell,\xi_m)|\left[
  \frac{h(\rho_{k,\ell,m})}{\rho_{k,\ell,m}}+\aint_{\Delta(x_k,x_\ell,x_m)}V(x)\dd x\right],
\end{align*}
for which we use the reasonable approximation (recall $h^\#(s)=sh(1/s)$),
\begin{align*}
  \big[\ent_{h,V,0}^\#\big]_\bfxi(\bfx)
  = \sum_{(k,\ell,m)\in\bfxi}|\Delta(\xi_k,\xi_\ell,\xi_m)| & \left[
  h^\#\left(\frac{\det(x_\ell-x_k|x_m-x_k)}{\det(\xi_\ell-\xi_k|\xi_m-\xi_k)}\right) \right.\\
  &\left. \qquad +V\left(\frac{x_k+x_\ell+x_m}3\right)\right],  
\end{align*}
The full discretization is again obtained by using the implicit Euler discretization in time,
\begin{align}
  \label{gf:002}
  -\frac{\bfx^n-\bfx^{n-1}}{\dt}
  = \grd_\bfxi  \big[\ent_{h,V,0}^\#\big]_\bfxi(\bfx^n)
  := A_\bfxi^{-1}\left(\frac{\partial \big[\ent_{h,V,0}^\#\big]_\bfxi}{\partial x_k}(\bfx^n)\right)_{k=1}^K.
\end{align}
Naturally, the explicit calulation of the gradient is more involved than in $d=1$,
already because of the combinatorics from the triangulation.
The most important ingredient is this:
for any $\omega\in\setR^2$,
\begin{align*}
  g_{k,\ell,m}[\omega]&:=\omega\cdot\frac{\partial}{\partial x_k}h^\#\left(\frac{\det(x_\ell-x_k|x_m-x_k)}{\det(\xi_\ell-\xi_k|\xi_m-\xi_k)}\right) \\
  &= (h^\#)' \left(\frac{\det(x_\ell-x_k|x_m-x_k)}{\det(\xi_\ell-\xi_k|\xi_m-\xi_k)}\right)
  \frac{\tr\left[\cof(x_\ell-x_k|x_m-x_k)^T\,(-\omega|-\omega)\right]}{\det(\xi_\ell-\xi_k|\xi_m-\xi_k)} \\
  &=\frac{\Phi(\rho_{k,\ell,m}}{|\Delta(\xi_k,\xi_\ell,\xi_m)|}
    (1\ 1)\cof(x_\ell-x_k|x_m-x_k)^T\omega.
\end{align*}
Now using that for any regular $M\in\setR^{2\times2}$, one has
\begin{align*}
  \cof M = JAJ^T
  \quad\text{where}\quad
  J=\begin{pmatrix} 0 & -1 \\ 1 & 0 \end{pmatrix},
\end{align*}
one simplifies this further to obtain
\begin{align*}
  g_{k,\ell,m}[\omega]=\frac{\Phi(\rho_{k,\ell,m}}{|\Delta(\xi_k,\xi_\ell,\xi_m)|}\big[J(x_\ell-x_m)\big]\cdot\omega.
\end{align*}
To obtain the $k$th component of the gradient $\grd_\bfxi  \big[\ent_{h,V,0}^\#\big]_\bfxi(\bfx^n)$,
one thus needs to add up the expressions
\begin{align*}
  \Phi(\rho_{k,\ell,m})\,J(x_\ell-x_m) + \frac{|\Delta(\xi_k,\xi_\ell,\xi_m)|}3\nabla V\left(\frac{x_k+x_\ell+x_m}3\right)
\end{align*}
over all $k$ and $\ell$ such that $(k,\ell,m)\in\bfxi$.

The main results of \cite{gf-CDMM} are:
\begin{itemize}
\item The numerical scheme \eqref{gf:002} performs well in a variety of experiments.
\item The time stepping \eqref{gf:002} is variational, but the minimization problem is not convex.
\item The scheme is consistent with \eqref{gf:1} at first order as long as the triangulation induced by $\bfx$
  remains approximately perfectly hexagonal.
  As time evolves and mesh deformations becomes significantly non-symmetric, consistency will indeed fail.
\end{itemize}
The mentioned loss of convexity in \eqref{gf:002} calls for a remark.
In \cite{gf-CDMM}, we present a quite general construction of a curve in the space of Lagrangian maps along which convexity fails.
However, these example curves are related to \emph{rotations} of triangles in the plane;
in fact, it follows from the calculations in \cite[Appendix C]{gf-CDMM} that along curves in the space of Lagrangian maps
which are locally given by matrices with real eigenvalues, convexity does hold.
This explains why we have never observed any difficulties related to this mild form of non-convexity in the numerical simulations:
rotation of triangles is a very unlikely phenomenon to appear in the minimization process,
since they cost kinetic energy, but hardly change the potential energy.
\begin{figure}[ht!]
  \includegraphics[width=0.45\textwidth]{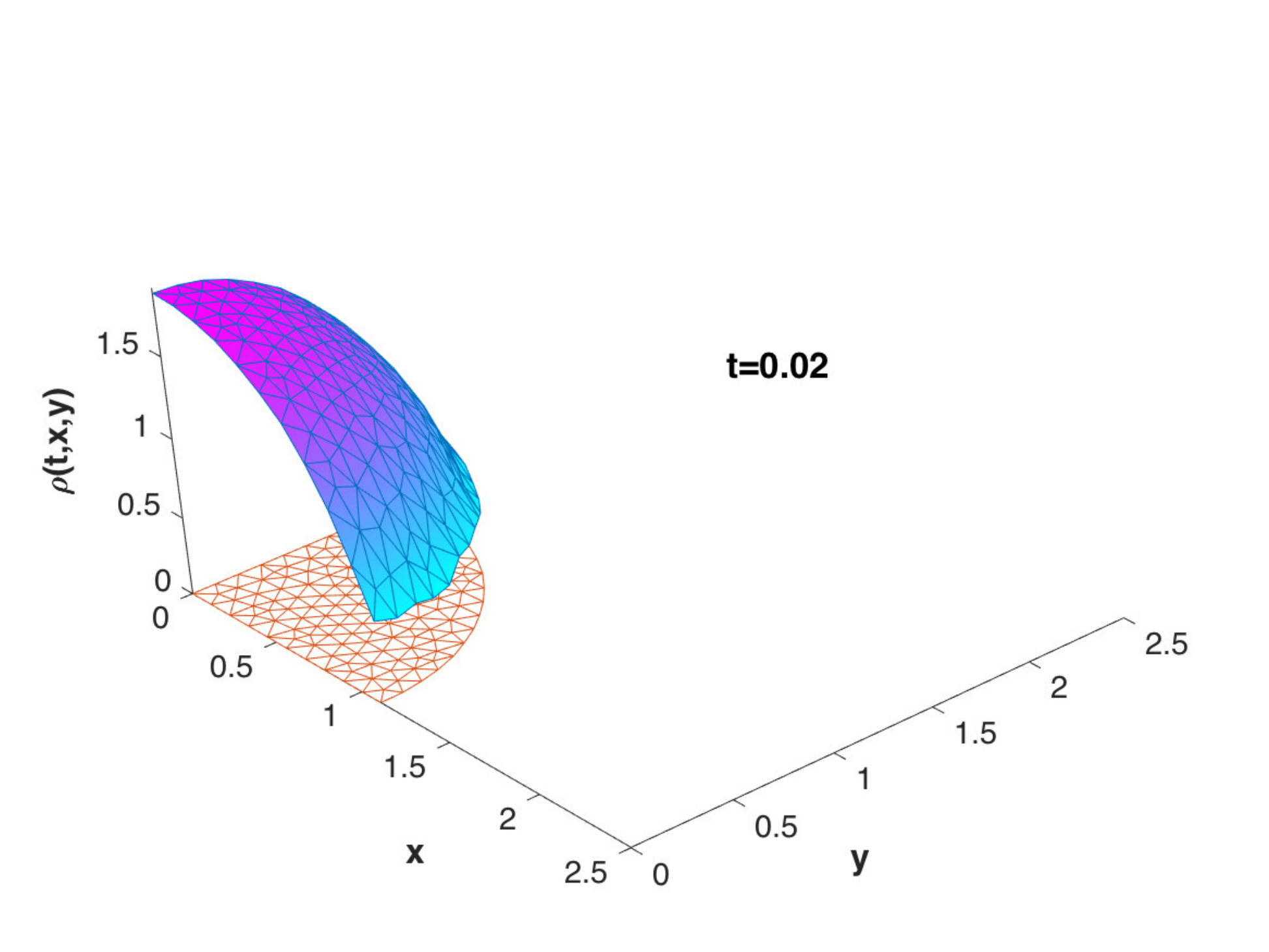}%
  \includegraphics[width=0.45\textwidth]{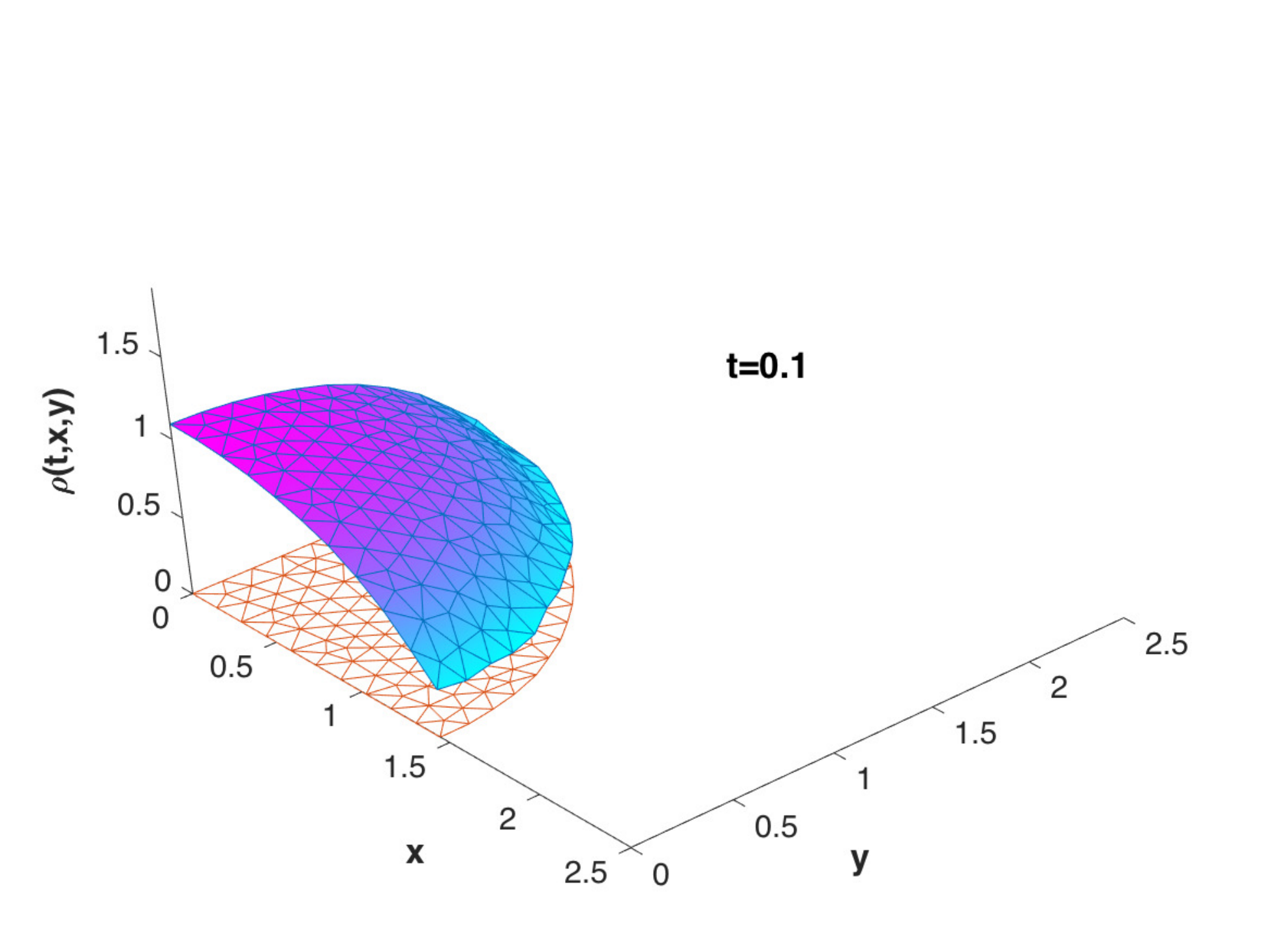}\\
  \includegraphics[width=0.45\textwidth]{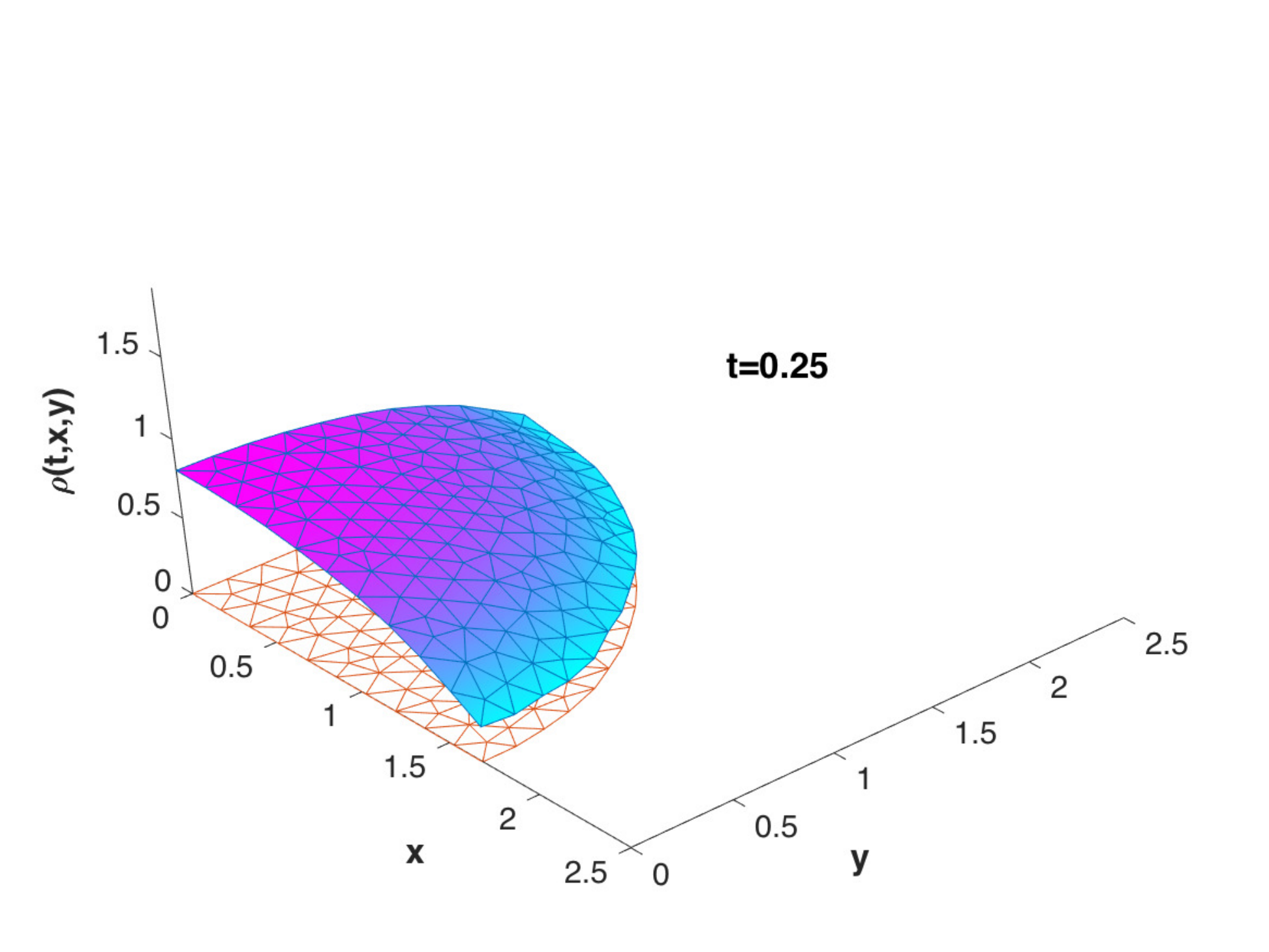}%
  \includegraphics[width=0.45\textwidth]{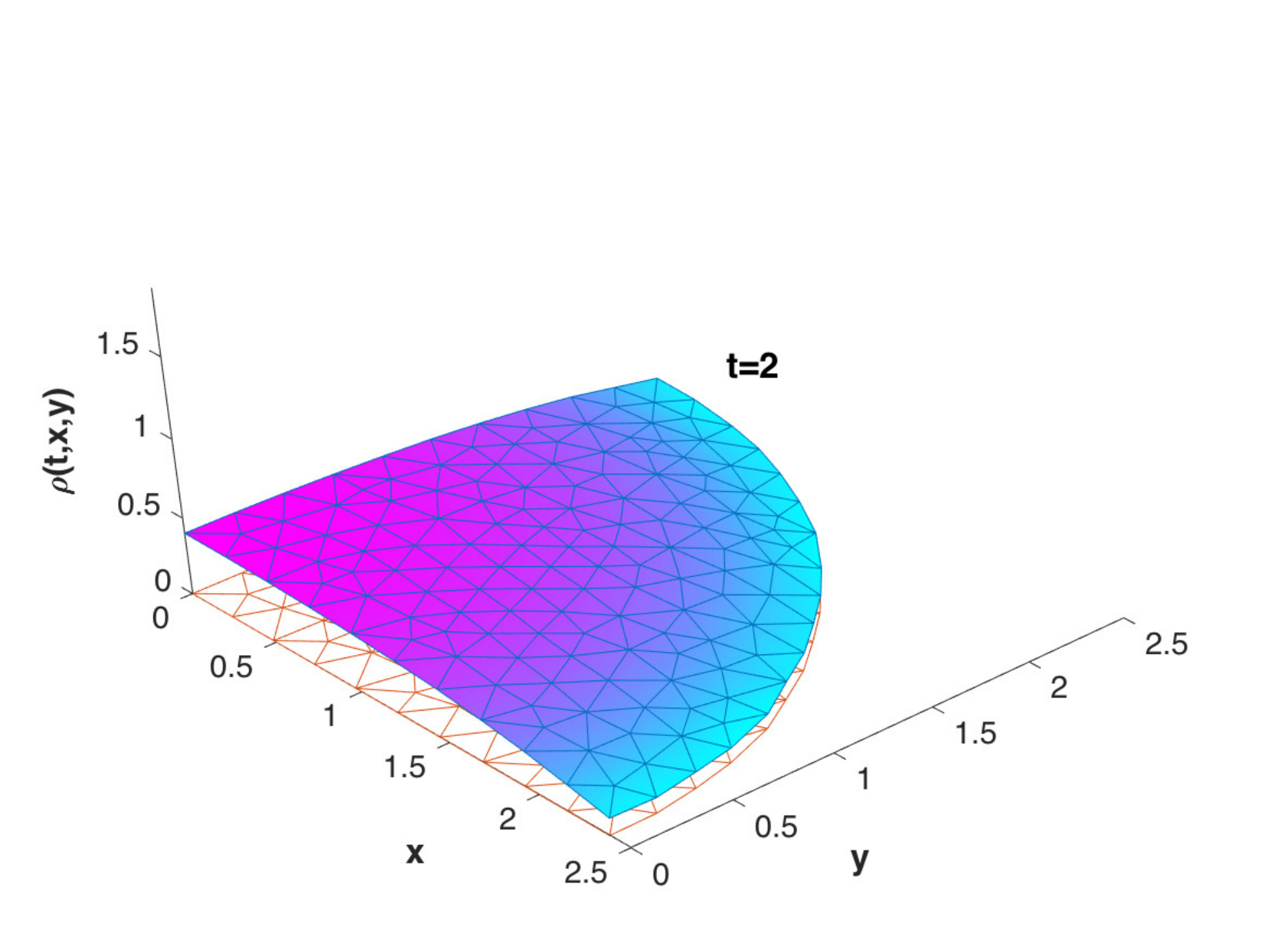}
  \caption{Numerical experiment 1: fully discrete evolution of our approximation for the self-similar solution
    to the free porous medium equation.
    Snapshots are taken at times $t=0.02$, $t=0.1$, $t=0.25$, and $t=2.0$.
  }
\label{fig:PMEevol}
\end{figure}

For illustration, we present some numerical experiments from \cite{gf-CDMM} for~\eqref{fpnonlinear}
with a cubic porous-medium nonlinearity $\Phi(r)=r^3$ and $V=0$. It is well-known (see, e.g., 
Vazquez~\cite{book:Vazquez}) that in the long-time limit $t\to\infty$, arbitrary solutions approach a 
self-similar solution called the Barenblatt profile. To reduce numerical effort, we imposed a four-fold symmetry 
of the approximation: we use the quarter circle as computational domain $K$, and interprete the discrete function 
thereon as one of four symmetric pieces of the full discrete solution. To preserve reflection symmetry over time,
homogeneous Neumann conditions are imposed on the artificial boundaries. This is implemented by reducing 
the degrees of freedom of the nodes along the $x$- and $y$-axes to tangential motion.

We initialize our simulation with a piecewise constant approximation of the Barenblatt profile at time $t=0.01$.
We choose a time step $\tau=0.001$ and the final time $T=2$.
In Figure~\ref{fig:PMEevol}, we have collected snapshots of the approximated density at different instances of time showing that 
the Barenblatt profile is well approximated. We refer for more details and other numerical tests to \cite[Section 6]{gf-CDMM}.

\subsection{Iteration of Lagrangian maps}
We shall now review the ansatz made in \cite{gf-JMO}.
The basic idea is to put no a priori restrictions on $X:\Theta\to\Omega$ itself, but on its iterative updates.
That is, we choose a finite dimensional set $\vansatz$ of vector fields $\velo:\Omega\to\setR^2$
that are tangential to the boundary of $\Omega$,
and ``discretize'' the variational problems \eqref{gf:mmL} in space
by minimizing only over Lagrangian maps of the form $X=(\id+\dt\velo)\circ X_\dt^{n-1}$, with $\velo\in\vansatz$.
The two motivations for this ansatz are the following:
\begin{itemize}
\item If $\vansatz$ only contains gradient vector fields,
  then the minimization problem \eqref{gf:mmL} inherits the convexity of \eqref{gf:mm}.
  Indeed, for $\velo=\nabla\varphi$, one has
  \begin{align*}
    \|(\id+\dt\velo)\circ X_\dt^{n-1}-X_\dt\|_{L^2_\theta} &= \|(id+\dt\velo)-\id\|_{L^2_{(X_\dt^{n-1})\#\theta}}\\
    &= \dt\wass\big((\id+\dt\velo)\#(X_\dt^{n-1})\#\theta,(X_\dt^{n-1})\#\theta\big),
  \end{align*}
  which preserves the equivalence between \eqref{gf:mmL} and \eqref{gf:mm}.
\item By choosing very regular ansatz functions in $\vansatz$,
  one expects to otbain a higher order of consistency in space,
  at least for approximation of smooth solutions.
\end{itemize}
The particular choice made in \cite{gf-JMO} are trigonometric polynomials,
\begin{align*}
  \vansatz
  = \left\{ \velo=\nabla\varphi\,\middle|\,
  \varphi(x) = \sum_{k_1,k_2=0}^Ka_{k,\ell}\cos(\pi k_1x_1)\cos(\pi k_2x_2)\right\}.
\end{align*}
The difficulty in the implementation of this scheme is that the Lagrangian map $X_\dt^n$ after the $n$th iteration
is the concatenation of $n$ maps of the form $\id+\dt\velo$, i.e.,
\begin{align*}
  X_\tau^n = (\id+\dt\velo_\dt^n)\circ\cdots\circ(\id+\dt\velo_\dt^1)\circ X_\tau^0.  
\end{align*}
Naturally, these concatenations cannot be evaluated explicitly,
and some further approximation needs to be performed for numerical approximation of the quantities in \eqref{gf:mmL}.

The ansatz made in \cite{gf-JMO} is to distribute ``test particles'' $\xi_m$ in $\Omega$
and monitor their motion and evolution of density under the Lagragian map;
these positions will be used as quadrature points for the integration in \eqref{gf:mmL}.
Specifically, after the $n$ step,
the position $x_m^n$ of $\xi_m$ and the density $\rho_m^n$ at $x_m^n$ are given by
\begin{align*}
  x_m^n:=X_\dt^n(\xi_m),
  \quad
  \rho_m^n:=\frac{\rho_m^0}{\det\dff X_\dt^n(\xi_m)}.
\end{align*}
Both quantities are easily obtained by iteration,
\begin{align*}
  x_m^n = x_m^{n-1}+\dt\velo^n(x_m^{n-1}),
  \quad
  \rho_m^n = \frac{\rho_m^{n-1}}{\det\big(\eins+\dt\dff\velo_\dt^{n}(x_m^{n-1})\big)}
\end{align*}
Finally, one assigns weights $\omega_m$ to the test particles,
that can be understood as particle masses. The choice
$\omega=1/M$ is obvious, however others improve the approximation quality of the integrals.
In the numerical experiments in \cite{gf-JMO},
we have partitioned the domain $\Omega$ into $K\times K$ square cells,
and have then used introduced equally distributed test particles with Lagrangian weights in each cell.

The iteration \eqref{gf:mmL} now amounts to minimizing
\begin{align}
  \label{gf:003}
  \sum_m\omega_k\left[\frac\dt2|\velo(x^{n-1}_k)|^2
  + h^\#\left(\frac{\rho_m^{n-1}}{\det\big(\eins+\dt\dff\velo(x_m^{n-1})}\right)
  + V\big(x_m^{n-1}+\dt\velo(x_m^{n-1})\big)\right].
\end{align}
The Euler Lagrange equations produce a a non-linear system of equations for the coefficients $a_{k,\ell}$ in $\velo_\dt^n$.

The main results of \cite{gf-JMO} are:
\begin{itemize}
\item The numerical scheme performs well in a variety of experiments.
\item The functional \eqref{gf:003} is uniformly convex of modulus $\frac1\dt+\lambda$ with respect to $\velo\in\vansatz$
  (and thus strictly convex in terms of the coefficients $a_{k,\ell}$ for $\dt>0$ small enough).
\item Assuming an a priori uniform bound on the density functions in $C^5(\Omega)$,
  one obtains convergence of the Lagrangian maps to a solution of \eqref{gf:1}.
\end{itemize}

\subsection{Lagrangian maps from Laguerre cells}
We briefly mention a semi-Lagrangian method developed in \cite{gf-BCMO}
for numerical approximation of solutions to the Fokker-Planck equation \eqref{gf:FP}.
The scheme consists of two steps, that are carried out alternatingly:
a Lagrangian one, in which a measure concentrated in $N$ points
is transformed into a piecewise constant density function,
and a projection step, in which the density function is again concentrated in $N$ point masses.

A central role is played by Laguerre cells, which we recall for convenience of the reader.
Given $N$ points $\{x_1,\ldots,x_N\}\in\Omega$ with respective real weights $\{\varphi_1,\ldots,\varphi_N\}\in\R$,
the corresponding $N$ Laguerre cells $\{L_1,\ldots,L_N\}\subset\Omega$ are defined as follows:
the $i$th cell $L_i$ consists of all $x\in\Omega$ such that $x\cdot x_i-\varphi_i\ge x\cdot x_j-\varphi_j$ for all $j$s.
In the special case that the weights are $\varphi_i=\frac12|x_i|^2$ for all $i$,
the $L_i$ are precisely the Voronoi cells for the point configuration $x_1,\ldots,x_N$.
For general weights, the geometry of Laguerre cells can be much more complicated;
for instance, one should not expect $x_i\in L_i$,
and one can even have $L_i=\emptyset$ for one or several $i$'s.
In any case, the $L_1,\ldots,L_N$ cover $\Omega$ completely,
and the intersection of two Laguerre cells is a convex set of lower dimension, possibly empty.
In fact, it is obvious from the definition that $L_i\cap L_j$ lies in a hyperplane orthogonal to $x_i-x_j$.
There is a characterization of Laguerre cells in terms of subdifferentials of convex functions.
Namely, let $\varphi:\Omega\to\R$ be the largest convex function
such that, on the one hand, $\varphi(x_i)=\varphi_i$ at every $i$,
and on the other hand, that $\nabla\varphi(\Omega)\subseteq\Omega$.
This $\varphi$ is globally Lipschitz, and has corner singularities at each $x_i$.
Recalling the definition of the subdifferential $\partial\varphi$,
it is easily seen that $\partial\varphi(x_i)=L_i$.

The ansatz for the Lagrangian step of the scheme in \cite{gf-BCMO} is to perform a minimizing movement,
i.e. to solve the minimization problem \eqref{gf:mm} with $\nrg=\ent_{h,0,0}$.
The ``old'' measure $\rho_{\Delta t}^{n-1}$ is assumed to consist of $N$ Diracs
at positions $\{x_1,\ldots,x_N\}\in\Omega$ with respective masses $\{m_1,\ldots,m_N\}$.
The set of admissible $\rho$ in the minimization of \eqref{gf:mm} consists of absolutely continuous densities
with the property that $\rho$ is piecewise constant on the $N$ Laguerre cells $\{L_1,\ldots,L_N\}\subset\Omega$
for the given positions $\{x_1,\ldots,x_N\}$ and some weights $\{\varphi_1,\ldots,\varphi_N\}$,
with the total mass in each $L_i$ equal to $m_i$.
That is, the weights $\{\varphi_1,\ldots,\varphi_N\}$ parametrize the space of admissible $\rho$'s.
The subsequent projection step consists in determining some point in each $L_i$, e.g. the Steiner point,
and concentrating the mass $m_i$ there.

There are several reasons for this choice of the Lagrangian step.
One is that the Wasserstein distance from each admissible $\rho$ to the datum $\rho_{\Delta t}^{n-1}$ can be calculated explicitly.
In fact, the aforementioned convex function $\varphi$ which extends the values of $\varphi_i$ at the $x_i$
is a Kantorovich potential for this transport.
Another reason, which also makes this scheme very special,
is that geodesic convexity of $\ent_{h,0,0}$ is inherited under the discretization.
More precisely, the map from the $N$-vector of values $\{\varphi_1,\ldots,\varphi_N\}$
to the functional value $\ent_{h,0,0}(\rho)$ at the respective piecewise constant density $\rho$ is convex.

In \cite{gf-BCMO}, the ability of the numerical scheme is verified in a variety of experiments.
On the analytical side, a rigorous proof is given for the $\Gamma$-convergence of the Lagrangian step
to one step in the corresponding minimizing movement scheme in the limit of spatial refinement.

\subsection{Particle approximation: blob method}

Another Lagrangian approach to nonlinear aggregation-diffusion equations of the form \eqref{gf:FP} has been pursued in \cite{carrillo2017blob}, also used in \cite{landau} for the Landau equation. The main strategy consists in using as approximation space for densities the set of finite linear combinations of Dirac Delta distributions. In other words, we want to approximate the gradient flow equations by a gradient flow in finite dimensions on the locations of these Dirac Deltas that we called particle locations. This approach is quite natural without diffusion $\Phi=0$. In fact, it is very much connected to the fact that the aggregation equation 
\begin{align}
  \label{gf:aggreg}
  \partial_t\rho_t = \dv\big(\rho_t\,[\nabla V + \rho_t\ast \nabla W]\big),
\end{align}
can be seen as the mean-field limit \cite{CarrilloChoiHauray,gf-CFetc,Jabin,CCHS} of a particle system of the form
 \begin{align} \label{partsol1} 
 	\dot{x}_i = -\nabla V(x_i) - \sum_{j \neq i } \nabla W(x_i - x_j) m_j .
\end{align}
This can be formally understood by taking the ansatz
$$
\rho_t \approx \rho^N_t =  \sum_{i = 1}^N \delta_{x_i(t)} m_i, 
$$
where $\delta_{x_i}$ is a Dirac mass centered at $x_i \in \Omega$, into \eqref{gf:aggreg} as a distributional solution. The initialization is done by discretizing the initial datum $\rho_0$ as a finite sum of $N$ Dirac masses,
\begin{align} \label{partinitial1}
 \rho_0 \approx \rho_0^N = \sum_{i = 1}^N \delta_{x_i^0} m_i, \qquad x_i^0 \in \Omega, \quad m_i \geq 0 .
 \end{align}
The particle method (\ref{partsol1})  provides a semi-discrete numerical method preserving the gradient flow structure since the discrete energy $\ent_{0,V,W}(\rho^N_t)$ is decreased along the solutions of  (\ref{partsol1}). In fact, the system (\ref{partsol1}) is a finite dimensional gradient flow of the discrete interaction energy $\ent_{0,V,W}(\rho^N_t)$ seen as a function of the particle locations. 

The approach to generalize these deterministic particle methods for diffusive equations is not that obvious since we cannot evaluate the energy $\ent_{h,0,0}(\rho)$ for finite linear combinations of Dirac Delta distributions. The main novelty introduced in \cite{carrillo2017blob} is to regularize the entropy functionals associated to diffusion equations. More precisely, let us take a mollifier $\varphi_\epsilon(x) = \varphi(x/\epsilon)/\epsilon^d$, $\epsilon >0$ for a given $\varphi$ smooth positive fast decaying function integrating to unity, i.e., we choose an approximation of the Dirac Delta at the origin. To showcase the regularization we propose, let us consider the particular case of the porous medium equation. Then, we propose to approximate the energy functional by 
 \begin{align*} %\label{regularizedentropy} 
 \ent_m(\rho)=\int_\Omega \rho^{m}\,dx \approx
 \ent_m^\epsilon(\rho)=\int_\Omega (\varphi_\epsilon*\rho)^{m-1}\rho \,dx.
\end{align*}
The advantage of the approximated energy $\ent_m^\epsilon(\rho)$ is that it makes sense for finite linear combination of Dirac Deltas. 
Other regularizations of the functional are possible, we chose this one since it leads to a pure particle system avoiding any continuous convolution as seen below.

The ultimate objective is then to approximate the gradient flow of the energy $\ent_m(\rho)$ by the gradient flow of the regularized energy $\ent_m^\epsilon(\rho)$. Finally, the regularized gradient flow can be approximated by the particle method in the same spirit as done earlier for the pure aggregation equation \eqref{gf:aggreg}. Note that the case $m=2$ is special since we approximate the porous medium equation with $m=2$ by the aggregation equation with a repulsive smooth potential.

In general, giving the energy functional $\ent_{h,V,W}(\rho)$ and assuming that $\Phi(r)=rF(r)$, we define the regularized functional as
\begin{align} \label{regularizedentropygen} 
	{\mathcal F}_\epsilon(\rho) =  \int_\Omega F(\varphi_\epsilon*\rho)\rho\, dx ,
\end{align}
and 
\begin{align} \label{regularizedentropygen2} 
	{\mathcal E}_\epsilon(\rho) =  \int_\Omega F(\varphi_\epsilon*\rho)\rho\, dx +\int_\Omega \big[\rho V + \frac12 \rho( W\ast\rho)\big]\dd x.
\end{align}
The gradient flow associated to this functional leads to $\partial_t\rho_t = \dv\big(\rho_t\nabla\frac{\delta{\mathcal E}_\epsilon}{\delta\rho}\big)$ with the variations given at any density $\rho$ of ${\mathcal F}_\epsilon$ given by
$$
	\frac{\delta \mathcal{F}_\epsilon}{\delta \rho} (\rho)= \varphi_\epsilon * (F' \circ (\varphi_\epsilon*\rho)\rho) + F\circ(\varphi_\epsilon *\rho).
$$
Then $\rho^N_t$ satisfies the weak formulation $\partial_t\rho_t = \dv\big(\rho_t\nabla\frac{\delta{\mathcal F}_\epsilon}{\delta\rho}\big)$ if and only if the particles follow the system
\begin{align} \label{ODEsystem}
	\begin{cases}
\displaystyle \dot{x}_i(t) =-\nabla V(x_i(t))- \sum_{j \neq i} \nabla W(x_i(t)-x_j(t))m_j- \nabla \frac{\delta {\mathcal F}_\epsilon}{\delta \rho} \left(\sum_j \delta_{x_j(t)} m_j \right) (x_i(t)), \\
		 x_i(0) = x_i^0,
	\end{cases}
\end{align}
for $t\in[0,T]$. It is proven in \cite[Corollary 5.5]{carrillo2017blob} that the ODE system \eqref{ODEsystem} is well posed under suitable convexity and growth conditions of the potentials in case of working in the whole space for the power nonlinear diffusion $F(r)=r^m$ and $m\geq 2$. Moreover, the particle method was shown to be convergent to the solutions of \eqref{gf:FP}
under suitable conditions, see \cite[Theorem 5.6]{carrillo2017blob}. Notice that the right hand side of \eqref{ODEsystem}  can be expanded as
$$
\nabla \frac{\delta {\mathcal F}_\epsilon}{\delta \rho} = 
\nabla \varphi_\epsilon * (F'  \circ \varphi_\epsilon*\rho)\rho) + F'\circ(\varphi_\epsilon *\rho) (\nabla \varphi_\epsilon *\rho),
$$
and that its evaluation in $\rho_t^N$ leads to the elimination of all convolutions by finite sums. In particular for linear diffusion leads to
$$
\nabla \frac{\delta {\mathcal F}_\epsilon}{\delta \rho} = 
\nabla \varphi_\epsilon * \left(\frac{\rho}{\varphi_\epsilon*\rho}\right) + \frac{\nabla \varphi_\epsilon *\rho}{\varphi_\epsilon *\rho}.
$$
Let us remark that the system \eqref{ODEsystem} is a finite dimensional gradient flow in $\R^{dN}$ of the discrete regularized entropy functional ${\mathcal E}_\epsilon(\rho^N)$ seen as a function of the particle positions  $\{x_1,\ldots,x_N\}\in\R^d$. Therefore, the system 
\eqref{ODEsystem} keeps the gradient flow structure at the semidiscrete level and the discrete regularized energy is dissipated according to the same law of the continuous problem given by
$$
\frac{d}{dt} {\mathcal E}_\epsilon(\rho^N(t)) = - \int_\Omega \left|\nabla \frac{\delta {\mathcal F}_\epsilon}{\delta \rho}(\rho^N(t))\right|^2
\rho^N(t) \, dx.
$$ 
\begin{center}
\begin{figure}[ht!] 
\centering
\includegraphics[scale=.7]{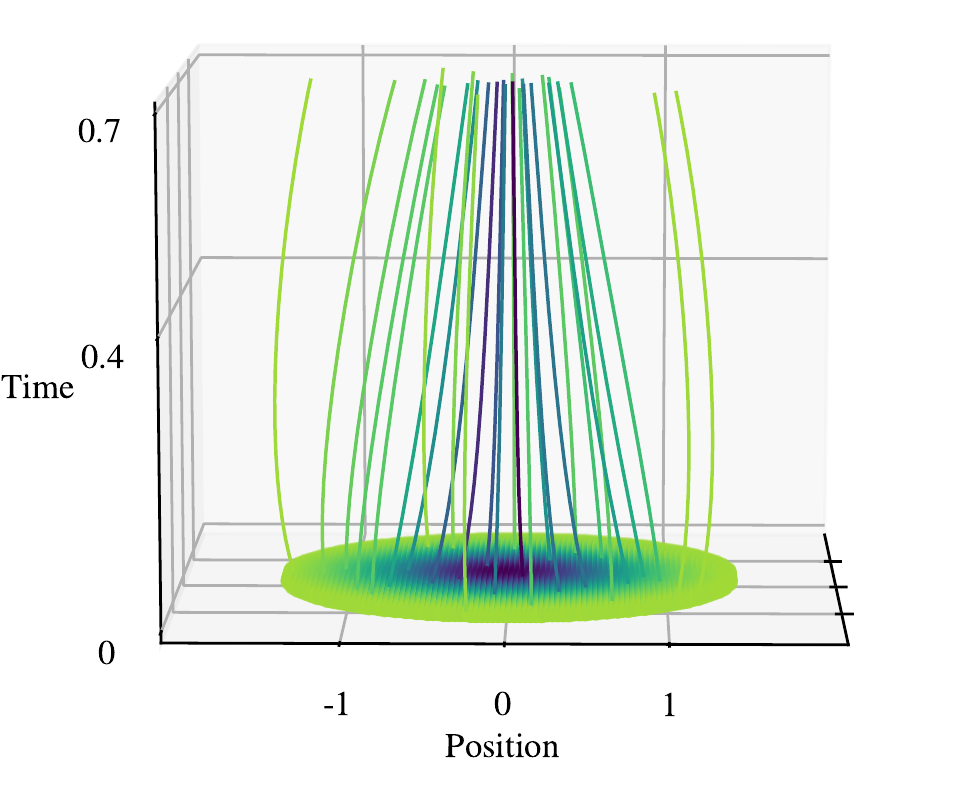}
\caption{Two-Dimensional Keller--Segel Equation: Blowup with Supercritical Mass $9 \pi$. Evolution of particle trajectories, colored according to the relative mass of each trajectory.} \label{2DKSSup}
\end{figure}
\end{center}
We illustrate the particle method  \eqref{ODEsystem} with some examples extracted from \cite[Section 6]{carrillo2017blob} in which the mollifier is chosen as a Gaussian.
We consider the classical Keller--Segel equation ($V=0$, $W(x) = 1/(2 \pi)\log|x|$, $m=1$) in two dimensions without normalizing the mass of the density. There is a dichotomy 
between global existence and blow-up given by the critical mass $8 \pi$, and in particular, for supercritical initial data, solutions blow up in finite time \cite{DP,BDP}.
In Figure \ref{2DKSSup}, we show the particles for the case of supercritical mass $9\pi$. Indeed, one of the benefits of our blob method approach is that the numerical method naturally extends to two and more dimensions, and we observe similar numerical performance independent of the dimension. We also plot the evolution of particle trajectories, observing the 
tendency of trajectories in regions of larger mass to be driven largely by pairwise attraction, while trajectories in regions of lower mass feel more strongly the effects of diffusion.

In Figures \ref{2DKSden} and \ref{2DKSSup}, the initial data is given by a Gaussian mollifier scaled to have mass that is either supercritical ($> 8 \pi$), 
critical ($=8 \pi$), or subcritical ($< 8 \pi$) with respect to blowup behavior \cite{DP,BDP}. In Figure \ref{2DKSSup}, we observe how the particles associated to 
initial data with supercritical mass aggregate at the origin.
\begin{center}
\begin{figure}[ht]
\centering
\textbf{Two-Dimensional Keller--Segel Equation: Evolution of Density}
\vspace{0cm}
\begin{flushleft} \hspace{1.15in} t = 0.0 \hspace{1.15in} %t = 0.075 \hspace{1.15in}
 t = 0.15 \hspace{.1in} \end{flushleft}
\vspace{-.1cm}

{\footnotesize Subcritical Mass $=7\pi$}

\hspace*{-1.2cm}
\includegraphics[trim={.83cm 1cm 1.6cm 1cm},clip,scale=.85]{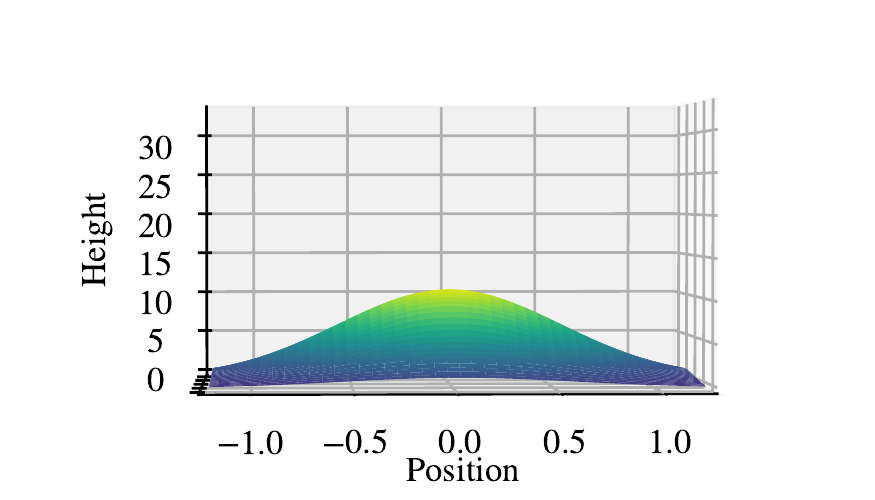}
\includegraphics[trim={2cm 1cm 1.6cm 1cm},clip,scale=.85]{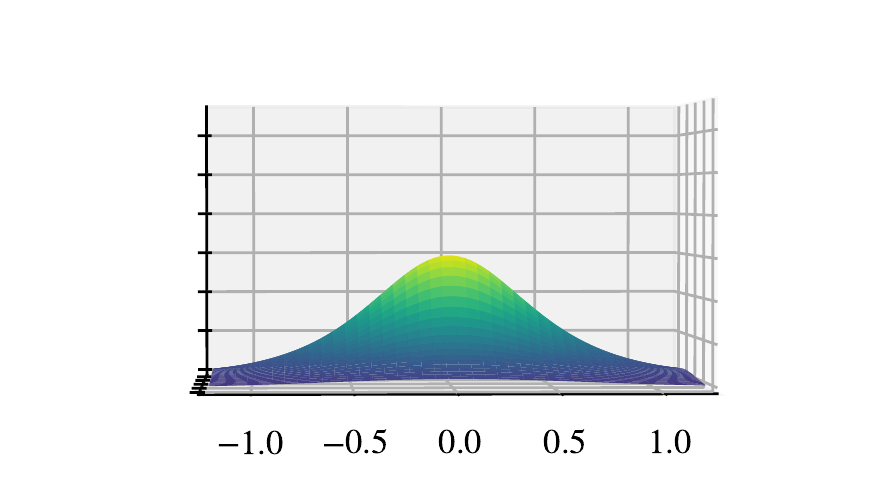}

\vspace{-.1cm}
{\footnotesize Critical Mass $=8\pi$}

\includegraphics[trim={2.03cm 1cm 1.6cm 1cm},clip,scale=.85]{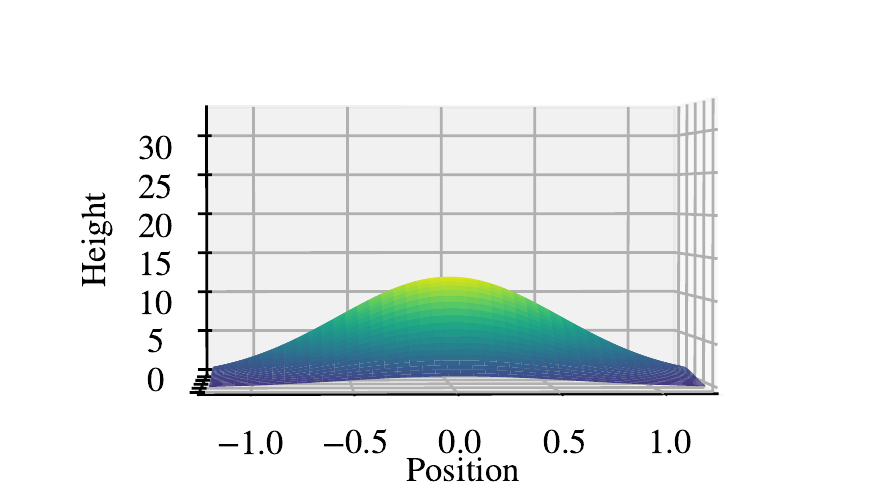}
\includegraphics[trim={2cm 1cm 1.6cm 1cm},clip,scale=.85]{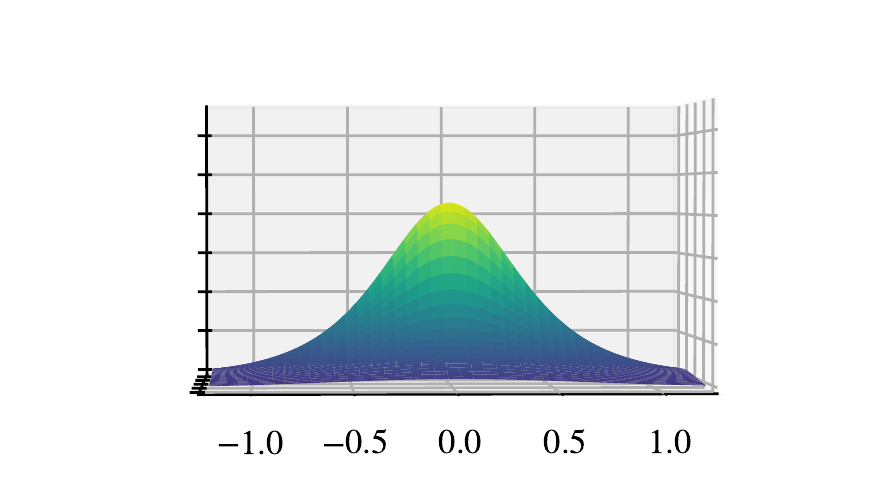}

\vspace{-.1cm}
{\footnotesize Supercritical Mass $=9\pi$}

\includegraphics[trim={2.03cm .1cm 1.6cm 1cm},clip,scale=.85]{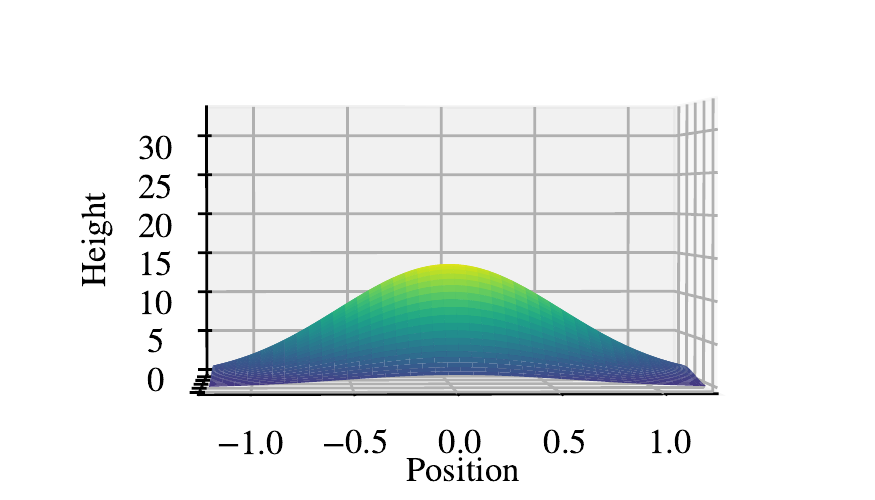}
\includegraphics[trim={2cm .1cm 1.6cm 1cm},clip,scale=.85]{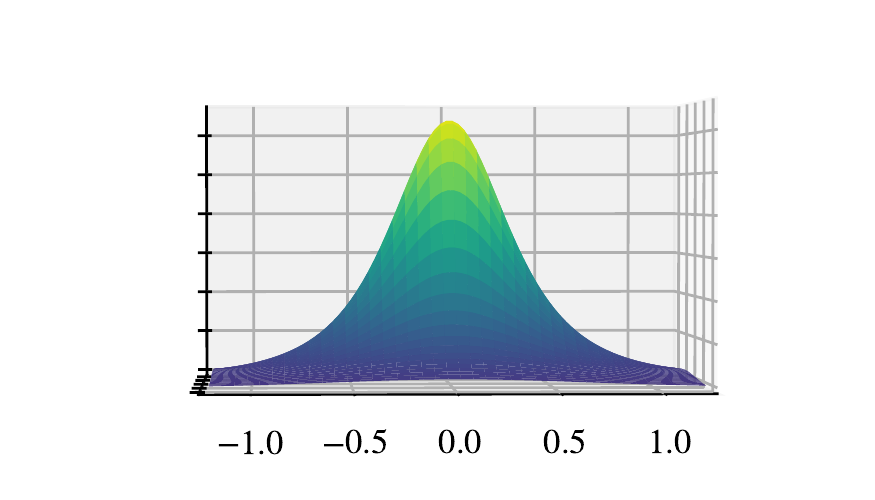}
\vspace{-.2cm}
\caption{Evolution of numerical solutions for the two-dimensional Keller--Segel equation with subcritical, critical, and supercritical initial data. } \label{2DKSden}
\end{figure}
\end{center}

\subsection{Particle approximation: Yosida regularization}
We briefly comment on a recent article \cite{gf-LMSS},
where the same idea as in \cite{carrillo2017blob} is pursued,
but with a different regularization procedure.
Again, a particle scheme for solution of the Fokker-Planck equation \eqref{gf:FP} is constructed,
such that the empirical measure $\rho^N_t$ associated to $N$ moving particles in $\R^d$
approximates the the solution $\rho_t$ in the weak-$\star$ sense for measures. 
Here the $N$ particle positions $\{x_1,\ldots,x_N\}\in\R^d$ obey an ODE system determined by the gradient flow on $\R^{dN}$ of a functional $\ent^\epsilon$, similarly to (but less explicit than in) equations \eqref{ODEsystem}.
We remark that \cite{gf-LMSS} focuses on the \emph{linear} Fokker-Planck equation with $W\equiv 0$
as well as on a model for crowd motion,
but the basic idea directly generalizes to non-linear Fokker-Planck equations.

As discussed above, one cannot directly evaluate the energy $\ent_{h,V,W}$ on the empirical measure $\rho^N$.
Instead of ``smearing out'' the particles with a mollifier as in \eqref{regularizedentropygen},
the Yosida approximation of the internal energy is used, i.e.,
\begin{align*}
  \ent_{h,V,W}^\epsilon(\rho) = \inf_\sigma\left[\frac1{2\epsilon}\wass(\rho,\sigma)^2 + \int_\Omega h(\sigma)\,dx\right]
  + \int_\Omega\big[\rho V+\frac12\rho(W\ast\rho)\big]\,dx,
\end{align*}
For $\rho=\rho^N$ the empirical measure of $N$ particles, the second integral reduces to a finite sum.
The Yosida approximation cannot be calculated explicitly (which is in contrast to the procedure in \cite{carrillo2017blob} explained above),
but efficient methods for its numerical approximation have been developed recently
in the context of semi-discrete optimal transport \cite{gf-KMT}.
The core idea is to obtain the Kantorovich potential $\psi$,
which only needs to be defined at the $N$ particle positions $x_1,\ldots,x_N$,
by adjusting the corresponding Laguerre cells such that they satisfy a certain geometric condition.

Unconditional convergence of the empirical measures $\rho^N_t$ to the solution $\rho_t$ of \eqref{gf:FP}
in the joint limit $N\to\infty$ and $\epsilon\to0$ has been proven in dimension $d=1$.
In higher space dimension, convergence can be shown if a certain uniform a priori bound is satisfied.
Convexity of the regularized functional is not discussed;
instead, it is shown that $\ent_{h,V,W}^\epsilon(\rho^N)$ is always semi-\emph{concave} as a function on $\R^{Nd}$,
which implies differentiability at all particle configurations with $N$ distinct point.
After discretization in time by the implicit Euler method, a fully practical numerical scheme is obtained.

\section{Optimize then discretize}\label{s:otd}
\noindent We conclude by adopting the reverse strategy, that is first optimize then discretize. 
This idea was first proposed in \cite{CM2009} and further developed in \cite{CRW2016}. 
Both methods are based on different temporal and spatial discretizations of the 
optimality system  to the respective $L^2$ gradient flow structure. This approach is particularly interesting 
in higher space dimension, where the calculation of the Wasserstein distance is computationally too expensive.
The optimality system of \eqref{gf:FPehash} corresponds to a highly non-linear parabolic PDE for the diffeomorphism $X$.
We will focus on the most general computational approach in the following; that is an implicit in time and finite element discretization in 2D, 
since all other schemes correspond to straight-forward simplifications.\\

\noindent We start from the optimality system of the $L^2$ gradient flow \eqref{gf:mmL} with the transformed relative entropy \eqref{gf:FPehash}. 
Then the diffeomorphism $X$ satisfies the following nonlinear PDE system in the limit $\Delta t \rightarrow 0$:
\begin{align}\label{e:diffeo}
\partial_t X &=  \dv\left[ \Psi'(\det DX) (\cof DX)^T \right] - \nabla V \circ X - \int_{\Theta} \nabla W(X(\xi)- X(\xi')) d\xi',
\end{align}
We recall that the diffeomorphism $X$ corresponds to the map from a general reference measure to the unknown probability density $\rho_t$. 
We will consider the simplest case - that is the uniform density as a reference measure on $[0,1]^2$ - in the following. Then $\rho_t$ can be computed for
sufficiently smooth $X$ via \eqref{gf:X2rho}:
\begin{align}\label{e:varchange}
\rho(X(\xi)) \det(DX(\xi)) = 1.
\end{align}

\subsection{Numerical scheme: multiD and finite element approach}
\noindent We start by stating the semi-discrete implicit discretisation, where $\Delta t$ denotes the discrete time step, $t^{n+1} = (n+1) \Delta t$ and that $X^{n+1}$ corresponds to the solution 
$X = X(\xi,t)$ at time $t^{n+1}$. Then \eqref{e:diffeo} reads as
\begin{align}\label{e:implicitdiffeo}
\begin{split}
\frac{X^{n+1}-X^n}{\Delta t} = \,& \dv [\Psi'(\det DX^{n+1}) (\cof DX^{n+1})] \\
&-   \nabla V(X^{n+1}) - \int_{\Theta } \nabla W(X^{n+1}(\xi)-X^{n+1}(\xi')) d\xi.
\end{split}
\end{align}
Its variational formulation for test functions $\varphi = \varphi(\xi) \in H^1(\Theta)$ defines a  nonlinear operator $F$, which is given by:
\begin{align}\label{e:F}
\begin{split}
F(X,&\varphi) = \frac{1}{\Delta t} \int_{\Theta}(X^{n+1}-X^n) \varphi(\xi) d\xi + \int_{\Theta} \Psi'(\det DX^{n+1}) (\cof DX^{n+1}) \nabla \varphi(\xi) d\xi \\
+ &\int_{\Theta} \nabla V(X^{n+1}) \varphi(\xi) d\xi +\int_{\Theta} \left[\int_{\Theta} \nabla W(X^{n+1}(\xi)-X^{n+1}(\xi')) d\xi' \right]  \varphi(\xi) d\xi .
\end{split}
\end{align}
Then the fully discrete formulation can be obtained by choosing a suitable spatial discretisation. We choose lowest order $H^1$ conforming finite elements, that is 
elementwise linear functions, for $X$ as in \eqref{e:fem}. Note that we will use the same notation for the infinite dimensional and finite dimensional testfunctions to 
enhance reability in the following. The spatial discretisation defines the nonlinear operator equation $F(X,\varphi)=0$, which can be solved using Newton-Raphson's method.  
In doing so, we compute the Jacobian matrix $DF$ of \eqref{e:F} as well as the $(k+1)$-th Newton update $Y^{n+1,k+1}$ via
\begin{align}\label{e:newtonupdate}
DF(X^{n+1,k}, \varphi) Y^{n+1,k+1} =-F(X^{n+1,k},\varphi),
\end{align}
for all test functions $\varphi(\xi) \in H^1(\Theta)$. 
Note that the Jacobian matrix $DF$ is a full matrix and has no sparse structure due to the convolution operator $W$. \\
This discretisation can be used for a very general class of equations. Solutions to these equations often exhibit complex features, such as compact supports or concentration phenomena. 
This corresponds to diffeomorphisms $X_t$ becoming degenerate, and it is therefore often useful to perform a damped Newton update via:
\begin{align*}
X^{n+1,k+1} = X^{n+1,k} + \alpha Y^{n+1,k+1},
\end{align*} 
where $0 < \alpha < 1$ is a suitably chosen damping parameter. The Newton iteration \eqref{e:newtonupdate} is terminated when a stopping criterion 
\begin{align*}
  \lvert F(X^{n+1,k+1},\varphi) \rvert \leq \epsilon_1 \text{ or } \lVert X^{n+1,k+1}-X^{n+1,k} \rVert \leq \epsilon_2,
\end{align*} 
for given error bounds $\epsilon_1>0$ and $\epsilon_2>0$ is satisfied. While the implicit in time discretization involves the solution of a nonlinear PDE system, 
it does not impose any CFL type condition on the time step as in the explicit case, see \cite{CM2009}.\\

\noindent The 'optimise-than-discretise' approach requires the transformation of the no-flux boundary conditions \eqref{e:bc} as well as the computation of 
the initial diffeomorphism $X_0$ given $\rho_0$.
The is not straight forward in higher space dimension and we will discuss the main ideas in $2D$ in the following.

 \noindent\subsubsection*{Boundary conditions: } 
 To formulate the respective no-flux boundary conditions for $X_t$  we consider diffeomorphisms, which map the boundary of the reference domain 
 $\partial \Theta$ onto $\partial \Omega$ without rotations only.
Then the translated no-flux boundary conditions \eqref{e:bc} are given by
\begin{align}\label{e:bcdiffeo}
  \nml^T (\cof DX)^T \partial_t X = (\cof DX) \nml \cdot \partial_t X  = 0.
\end{align} 
\noindent Note that \eqref{e:bcdiffeo} implies different conditions for different computational domains. Consider for example a rectangular mesh on 
$\Theta = [0,1]^2$ and $\Omega = [0,1]^2$. Then careful calculations yield
\begin{align*}
\partial_{\xi_1} X_2 = 0 \text{ for } \xi_1 = 0, \xi_1 = 1 \text{ and } \partial_{\xi_2} X_1 = 0 \text{ for } \xi_2 = 0, \xi_2 = 1.
\end{align*}
In case of a circles of radius $R$ equation \eqref{e:bcdiffeo} translates to
\begin{align}\label{e:bc2}
\sin \theta \partial_t X_2 \partial_{\xi_1} X_1 + \cos \theta \partial_t X_1 \partial_{\xi_2} X_2 &= 0,
\end{align}
 which implies 
\begin{align}
X_1(\xi,t) = X_2(\xi,t) = \text{Id}.
\end{align}
This yields $\partial_t X_1 =\partial_t X_2 = 0$ on the boundary.  

%%%%%%%%%%%%%%%%%%%%%%%%%%%%%%%%%%%%%%%%%%

\subsubsection*{Pre-processing and post-processing: calculating the initial diffeomorphism  and the final density}

We start by discussing how the initial diffeomorphism $X_0$ can be calculated given an initial density $\rho_0$. Different approaches have been proposed in the 
literature - depending on the spatial discretization of the underlying domain. We will review two possible constructions -  the first is based on a splitting approach 
in case of rectangular meshes, while the second uses density equalising maps for triangular meshes.

\paragraph*{Rectangular mesh: } This approach is based on splitting the problem in the $\xi_1$ and $\xi_2$ direction and solving the respective Monge-Kantorovich problems
in each direction. This gives the following one-dimensional Monge-Kantorovich problem in the 
$\xi_1$ direction: determine $a_i$ at every mesh point $\xi_{1,i} \in [0,1)$ such that
\begin{align*}
 \int_0^{a_i} \int_0^1 \rho_0(\eta, \zeta)d\zeta d\eta = \xi_{1,i}.
\end{align*}
Next we solve the Monge-Kantorovich problem in the $\xi_2$ direction. Hence we have to find $b_{ij}$ (which corresponds 
to the discrete value of a function $b$ at a grid point $\xi_{ij} = (i\Delta x, j \Delta y)$) such that
\begin{align*}
 \int_0^{b_{ij}}\rho_0(a_i,\eta)d\eta = \frac{\xi_2}{\sqrt{M}} \int_0^1 \rho_0(a_i, \eta)d\eta.
\end{align*}
The initial diffeomorphism is then given by $X_0(\xi_{ij}) = (a_i, b_{ij})$.
\paragraph*{Quadrilateral or triangular mesh:} In the case of general quadrilateral and triangular meshes the above construction does not work. Then one can solve 
the corresponding Monge Ampere  equation (giving the optimal transportation plan in case of quadratic cost), use Knote theory or use 
  density equalizing maps instead. We shall outline the latter approach, which is based on \cite{M1965}, further studied in \cite{ASMV2003} and also used in cartography \cite{GN2004}. 
  Hereby  the initial diffeomorphism is constructed by following the heat flow, which transports an initial density to the uniform density, backwards in time. 
This approach is very flexible, since it can be used for general domains and only requires a fast heat equation solver.\\
Consider the heat equation on a bounded domain $\Omega \subset \mathbb{R}^2$ and solve
\begin{align}\label{e:cons}
\partial_t \rho + \Div(\rho w) = 0, \qquad \mbox{with } w  = -\frac{\nabla \rho}{\rho},
\end{align}
with initial datum $\rho(0,x) = \rho_0(x)$ and homogeneous Neumann boundary conditions. The heat equation then transports the initial datum 
$\rho_0$ via the velocity field $w  = -\frac{\nabla \rho}{\rho}$ towards its equilibration density 
$\bar{w} = \frac{1}{\lvert \Omega \rvert} \int_{\Omega} \rho_0(\xi) d\xi$ as $t \rightarrow \infty$. 
Then the cumulative displacement $\mathbf{x}(t)$ of any point at time $t$ is determined by integrating the velocity field, which corresponds to solving
\begin{equation*}%\label{equ:displacements}
\mathbf{x}(t)=\mathbf{x}(0)+\int_0^t w(t',\mathbf{x}(t'))\,dt'.
\end{equation*}
As $t\rightarrow \infty$, the set of such displacements for all points $x = x(t)$ in $\Omega$, that is the grid points of the computational mesh, defines 
the new density-equalized domain. Note that we actually have to compute its inverse, since we need to find  the map which maps the constant density to the initial density $\rho_0$. 
\subsubsection*{Post-processing:} To obtain the final density $\rho_T := \rho(\xi,t=T)$ from the final diffeomorphism $X_T := X(\xi,t=T)$ we solve a 
regularized version of \eqref{e:varchange} (in 2D) given by:
\begin{align}\label{e:regvarchange}
\varepsilon \Delta \rho_T(X_T(\xi)) + \rho_T(X_T(\xi)) = \frac{1}{\det DX_T(\xi)} \text{ with } 0 < \varepsilon \ll 1.
\end{align}
This can be accomplised by solving for example the respective variational formulation using finite elements. Note that the regularisation $\varepsilon \Delta \rho$
ensures a stable reconstruction in case of compactly supported or aggregated solutions. 

\subsubsection*{Simulations}
We conclude by illustrating the dynamics with different examples in 1D and 2D. All results are based on the implicit in time discretization \eqref{e:implicitdiffeo}, the spatial
discretisation uses finite differences 1D and finite elements in 2D.
\begin{figure}[h!]
\begin{center}
\subfigure[Reconstructed BP profile  $\rho = \rho(x,t)$ at time $t=0.021$.]{\includegraphics[width=0.42\textwidth]{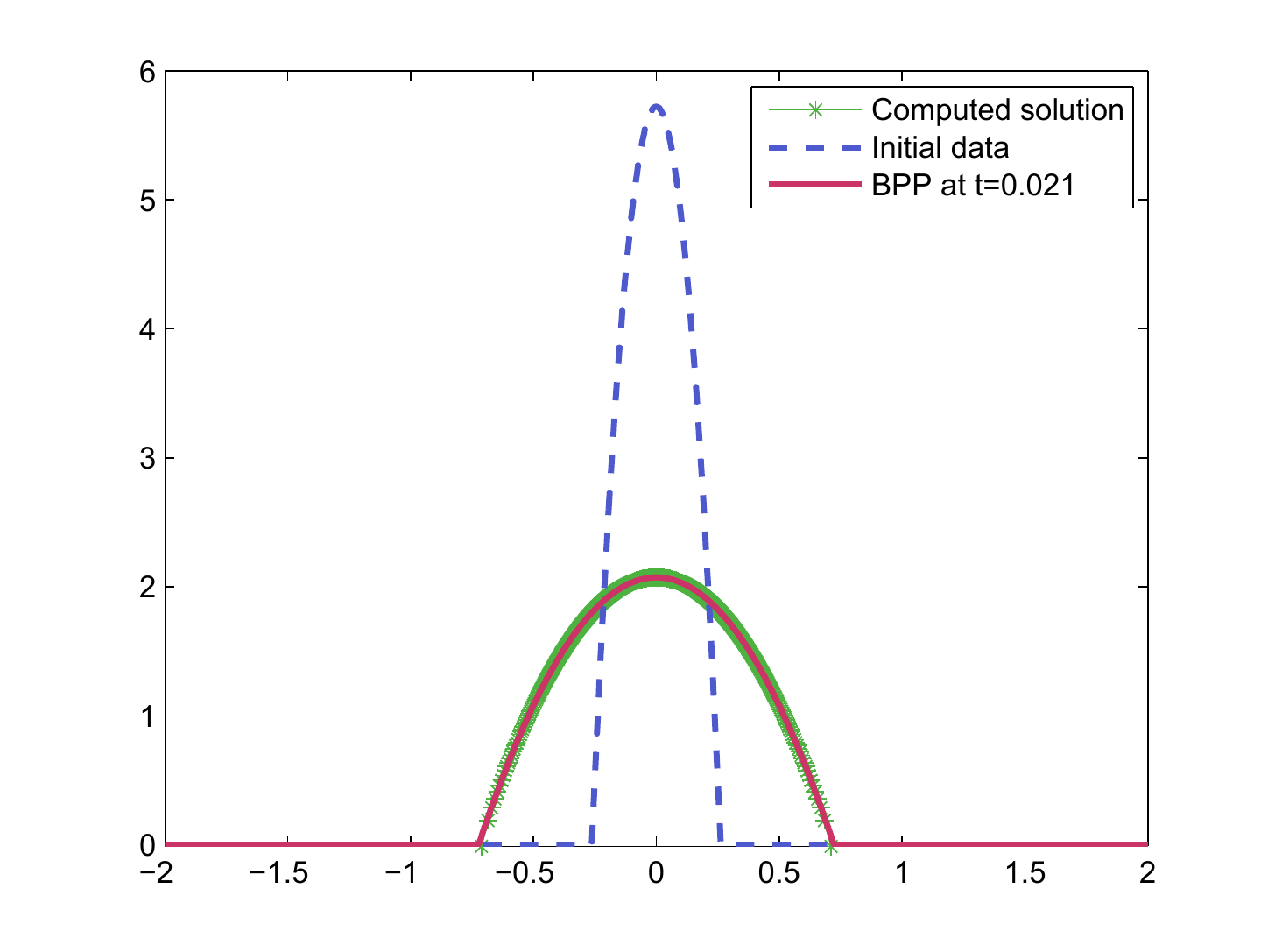}}\hspace*{0.5cm}
\subfigure[Evolution of the entropy in time with a logarithmic scale for the y-axis.]{\label{f:pme1_entropy}\includegraphics[width=0.42\textwidth]{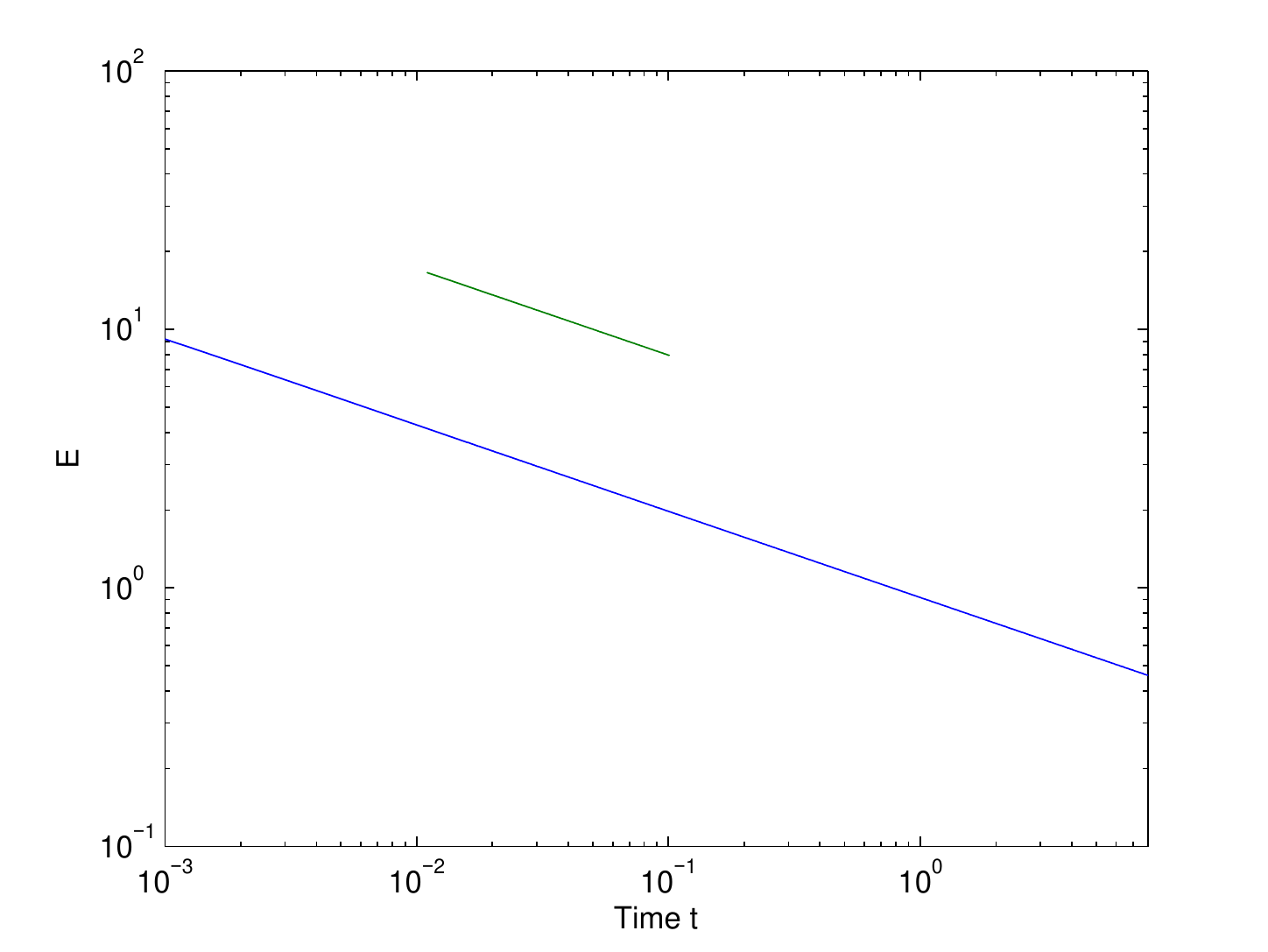}}
\caption{Solution of the PME for $m=2$ after 2000 time steps.} \label{f:pme1}
\end{center}
\end{figure}
In our first example we illustrate the behavior of our scheme for the porous medium equation (PME), which corresponds to $\Phi(s) = s^m, m > 1$ and $V=W=0$. 
The PME has self-similar solutions, the so-called Barenblatt Pattle profiles (BPP). We start our simulation with a BPP at time $t_0=10^{-3}$, i.e.
\begin{align*}
\rho_0(x) = \frac{1}{t_0^\alpha}\left(c-\alpha\frac{m-1}{2m}\frac{x^2}{t_0^{2\alpha}}\right)_+^{\frac{1}{m-1}},
\end{align*}
where $\alpha=\frac{1}{m+1}$ and $c$ is chosen such that $\int_{-1}^1 \rho_0(x)\,dx=2$.
Figures \ref{f:pme1} and \ref{f:pme2} show the density profiles $\rho$ at time $t=0.021$  for $m=2$ and $m=4$. It also illustrates the evolution of the free energy in time, 
which decays like $t^{-\alpha(m-1)}$. 
As seen from Figures \ref{f:pme1_entropy} and \ref{f:pme2_entropy}, these decays (indicated by the green lines) are perfectly captured by the scheme validating 
the chosen discretization. 
\begin{figure}[h!]
\begin{center}
  \subfigure[Reconstructed BP profile $\rho= \rho(x,t)$ at time $t = 0.021$.]{\includegraphics[width=0.42\textwidth]{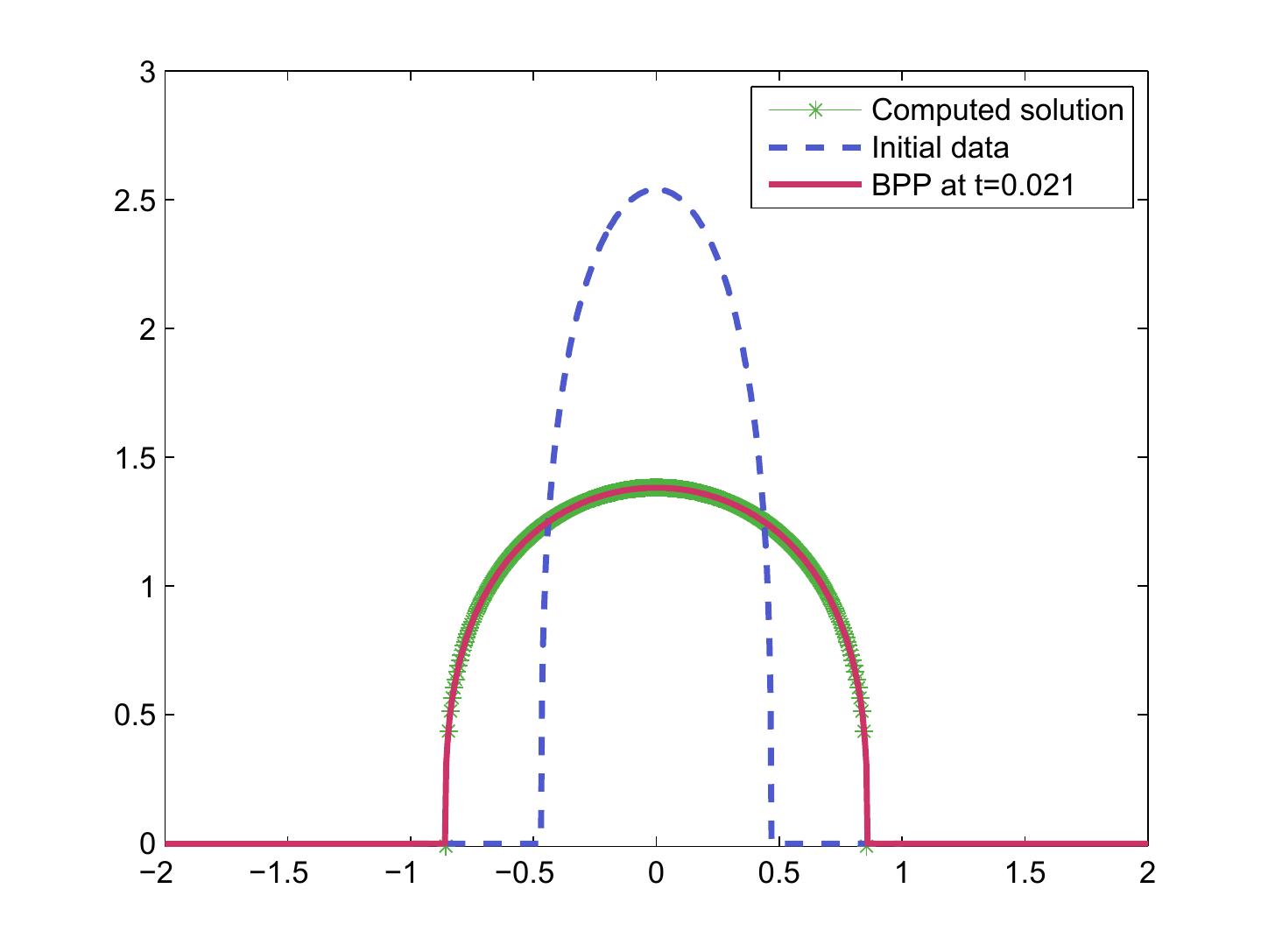}}\hspace*{0.5cm}
\subfigure[Evolution of the entropy in time with a logarithmic scale for the y-axis.]{\label{f:pme2_entropy}\includegraphics[width=0.42\textwidth]{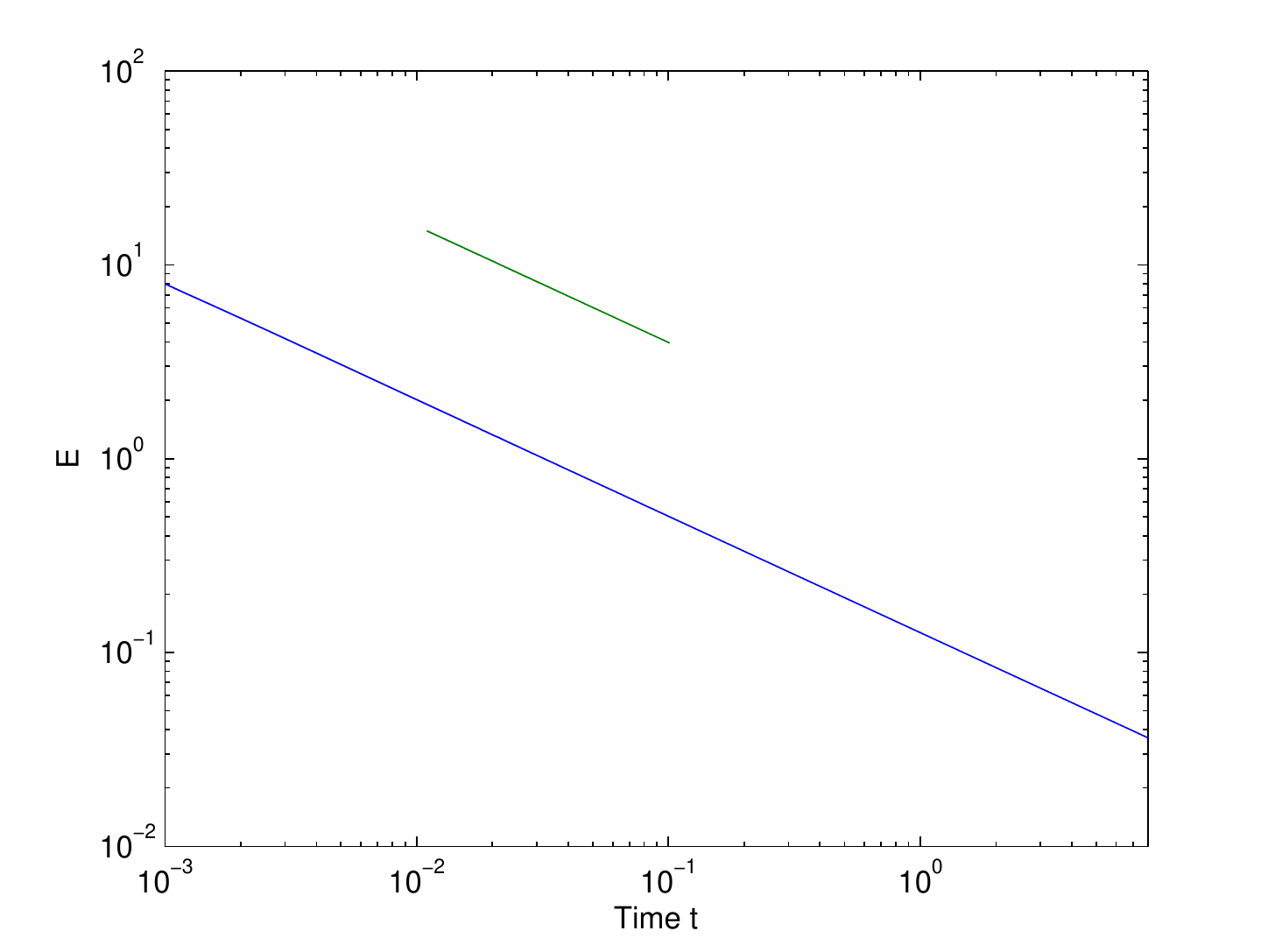}}
\caption{Solution of the PME for $m=4$ after 2000 time steps.} \label{f:pme2}
\end{center}
\end{figure}

Next we consider an agreggation equation with the logarithmic repulsive potential and harmonic confinement on the unit circle, that is $\Phi=0$ and
\begin{align*}
 W(x) = \frac{\lvert x \rvert^2}{2} - \ln \lvert x \rvert
\end{align*}
with an additional external potential of the form $V(x) = -\frac{\alpha}{\beta} \ln(\lvert x \rvert)$. 
\begin{figure}[h!]
\begin{center}
\subfigure[Transformed mesh]{\includegraphics[width=0.32\textwidth]{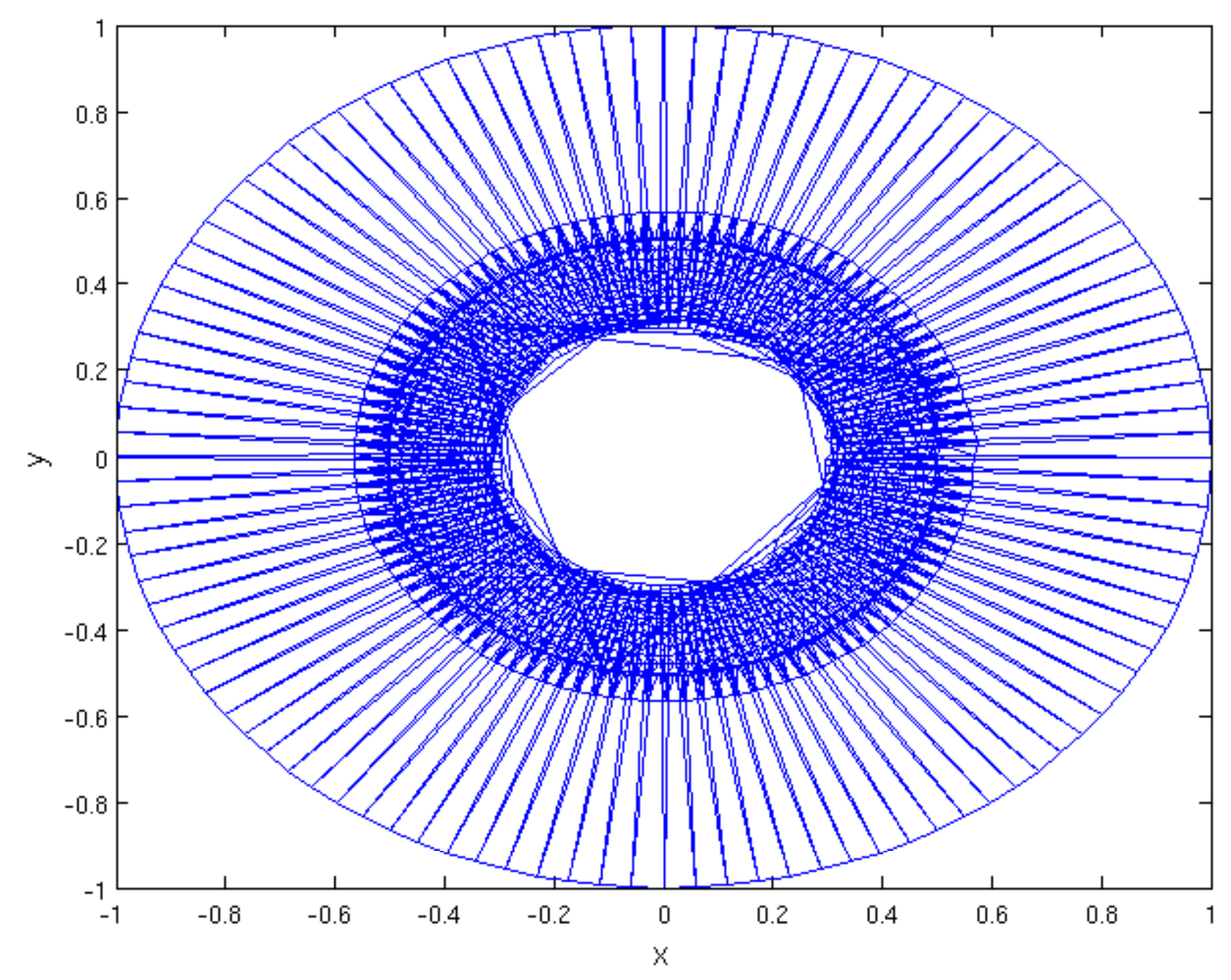}}
\subfigure[Density $\rho$]{\includegraphics[width=0.32\textwidth]{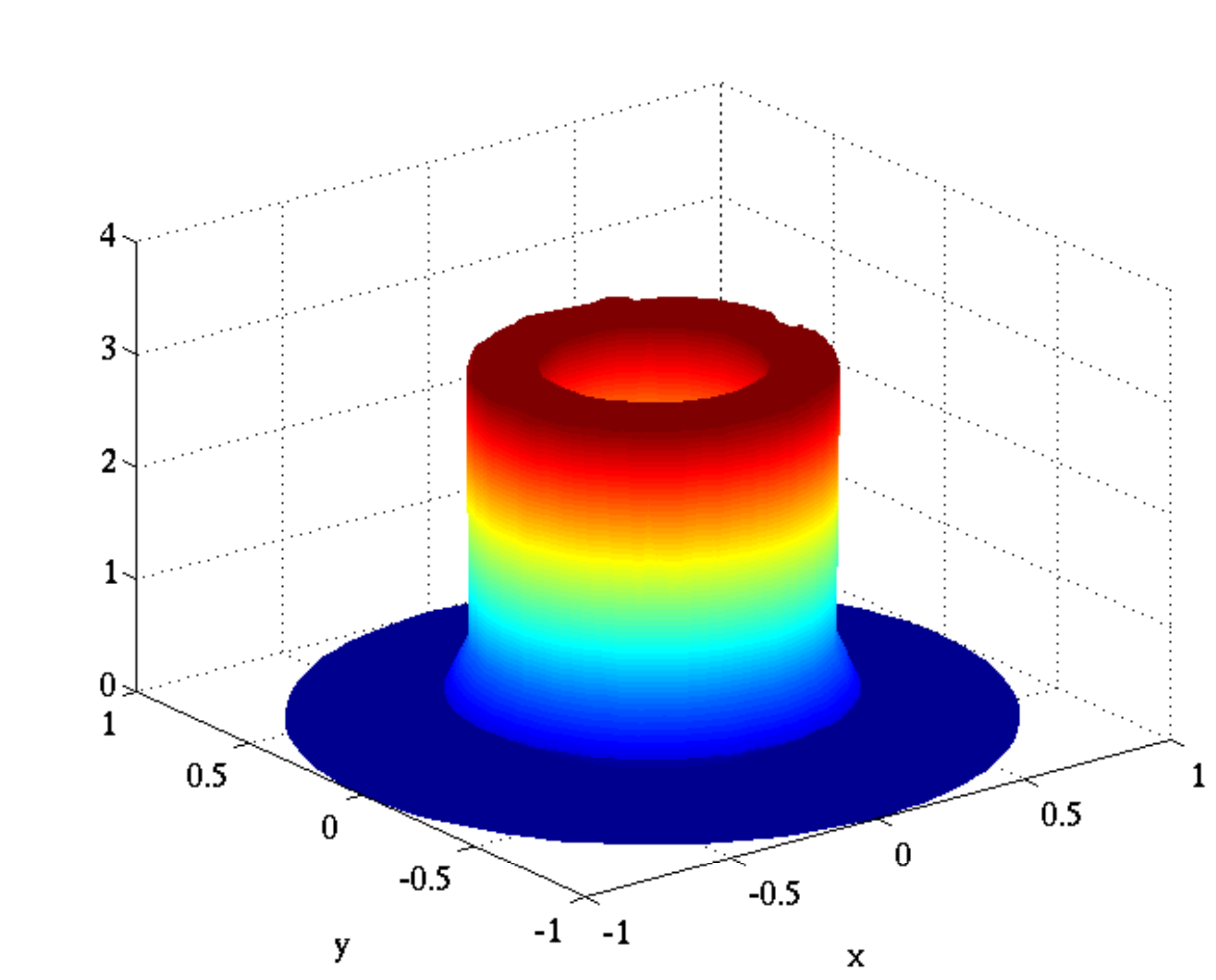}}
\subfigure[Relative entropy]{\includegraphics[width=0.32\textwidth]{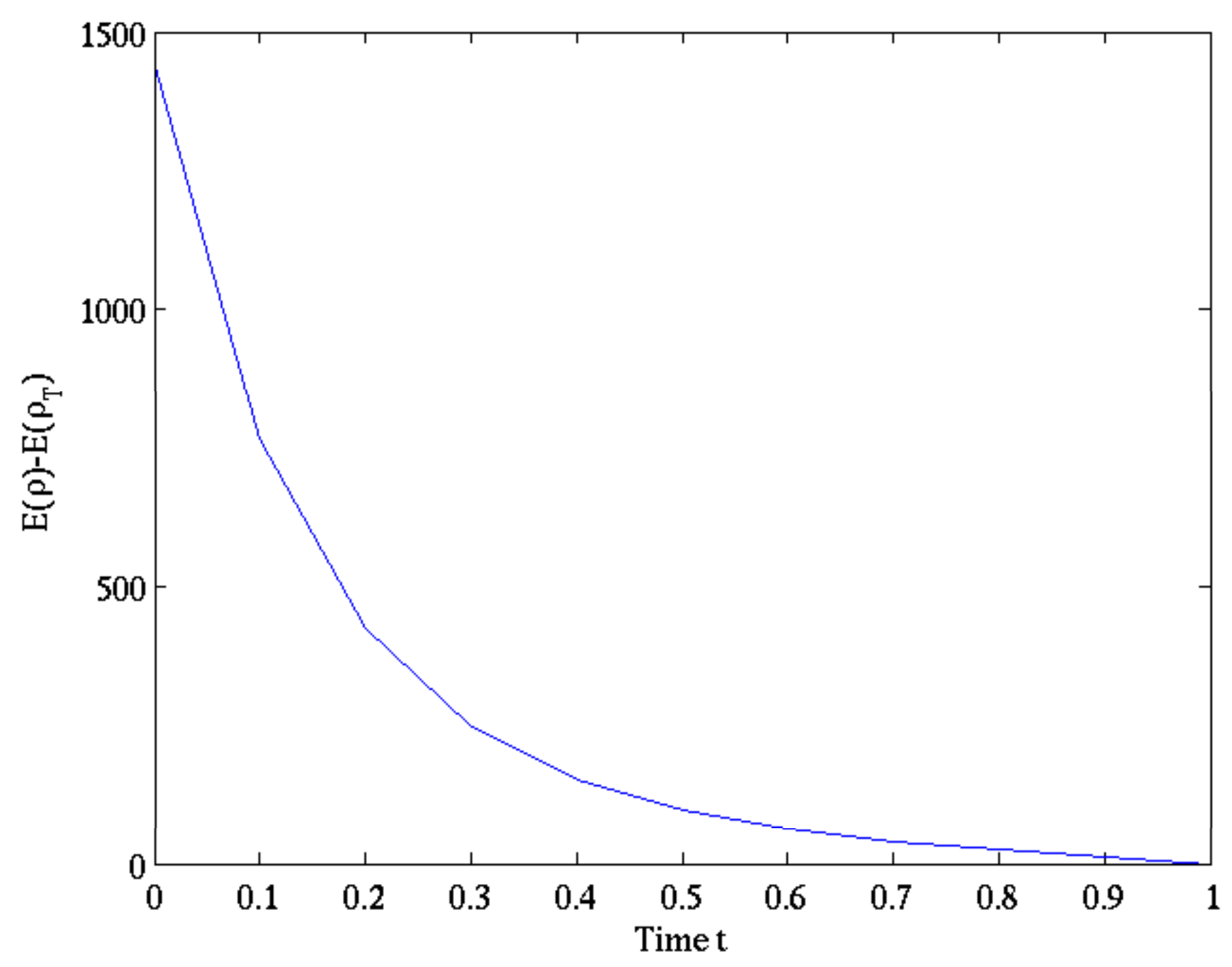}}
\caption{Simulation results for an attractive-repulsive potential $W = \frac{1}{2} \lvert x \rvert^2 - \ln(\lvert x \rvert)$ and an additional potential $V = \frac{1}{4} \ln (\lvert x \rvert)$.} \label{f:ex4}
\end{center}
\end{figure}
This model has been proposed as a way to find the spatial shape of the milling profiles in microscopic models for the dynamics of bird flocks.
Figure \ref{f:ex4} illustrates the corresponding annulus solution as well as the related computational challenges. The 'vacuum formation' at the center distorts the mesh and 
leads to degeneracies in the diffeomorphism.

In our final example we consider the KS model with an initial Gaussian of mass one 
and $\chi = 1.1 \times 8 \pi$.  We observe the expected blow up in Figure \ref{f:ex5}. Figure \ref{f:ex5}(c) indicates 
that the free energy decay is changing concavity as the free energy tries to decay faster possibly tending to $-\infty$ at the blow-up time.

\begin{figure}[h!]
\begin{center}
\subfigure[Transformed mesh]{\includegraphics[width=0.32\textwidth]{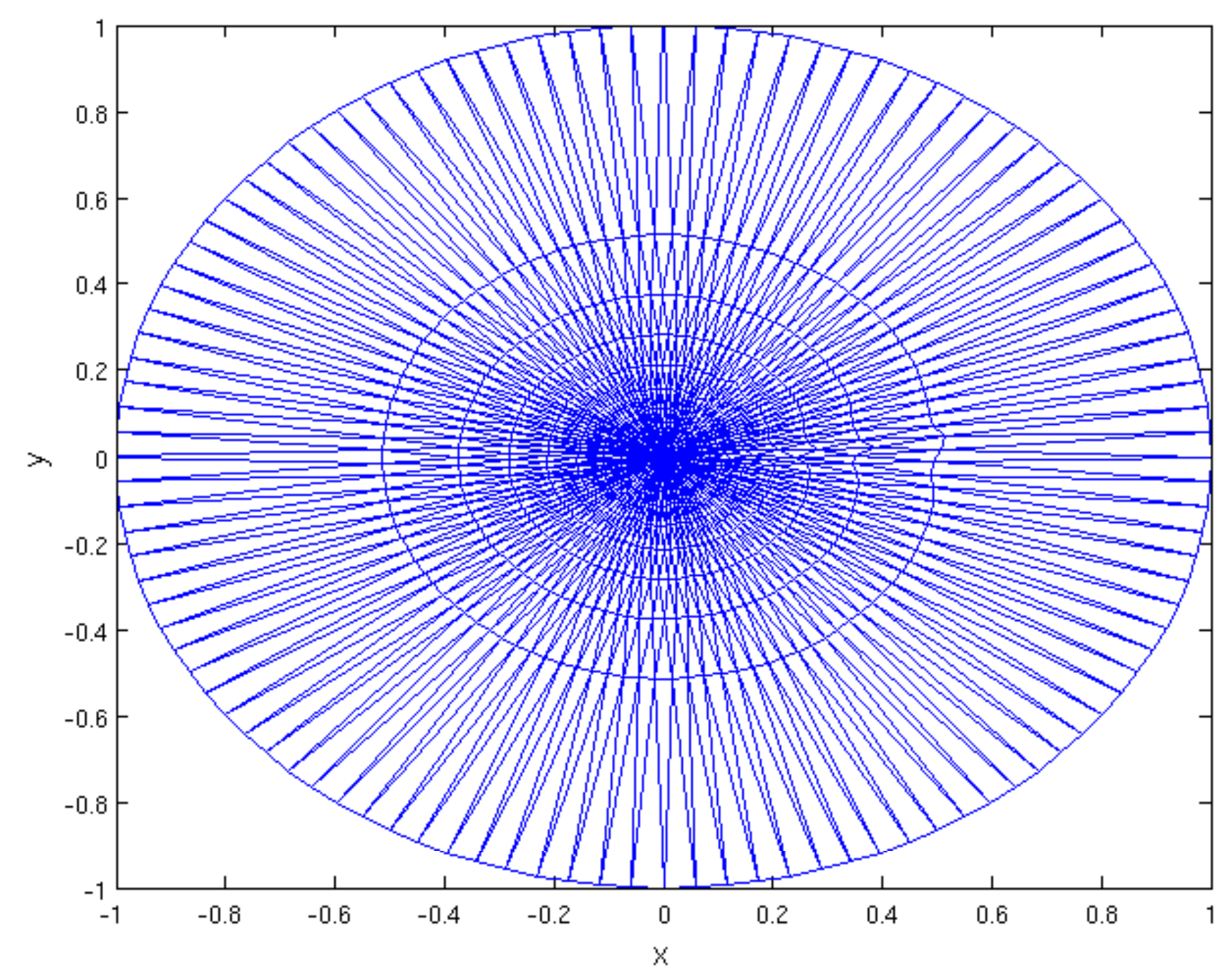}}
\subfigure[Density $\rho$]{\includegraphics[width=0.32\textwidth]{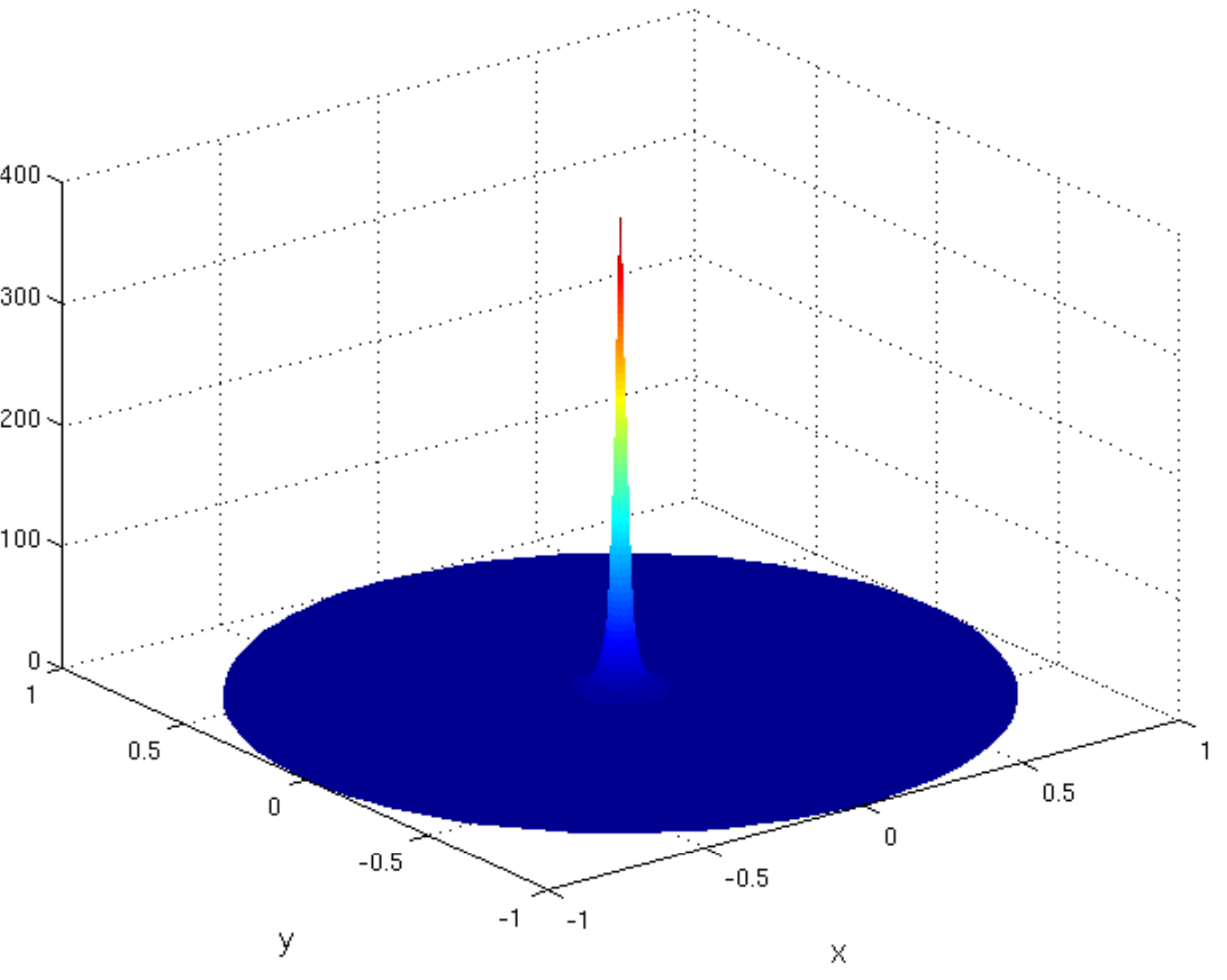}}
\subfigure[Relative entropy]{\includegraphics[width=0.32\textwidth]{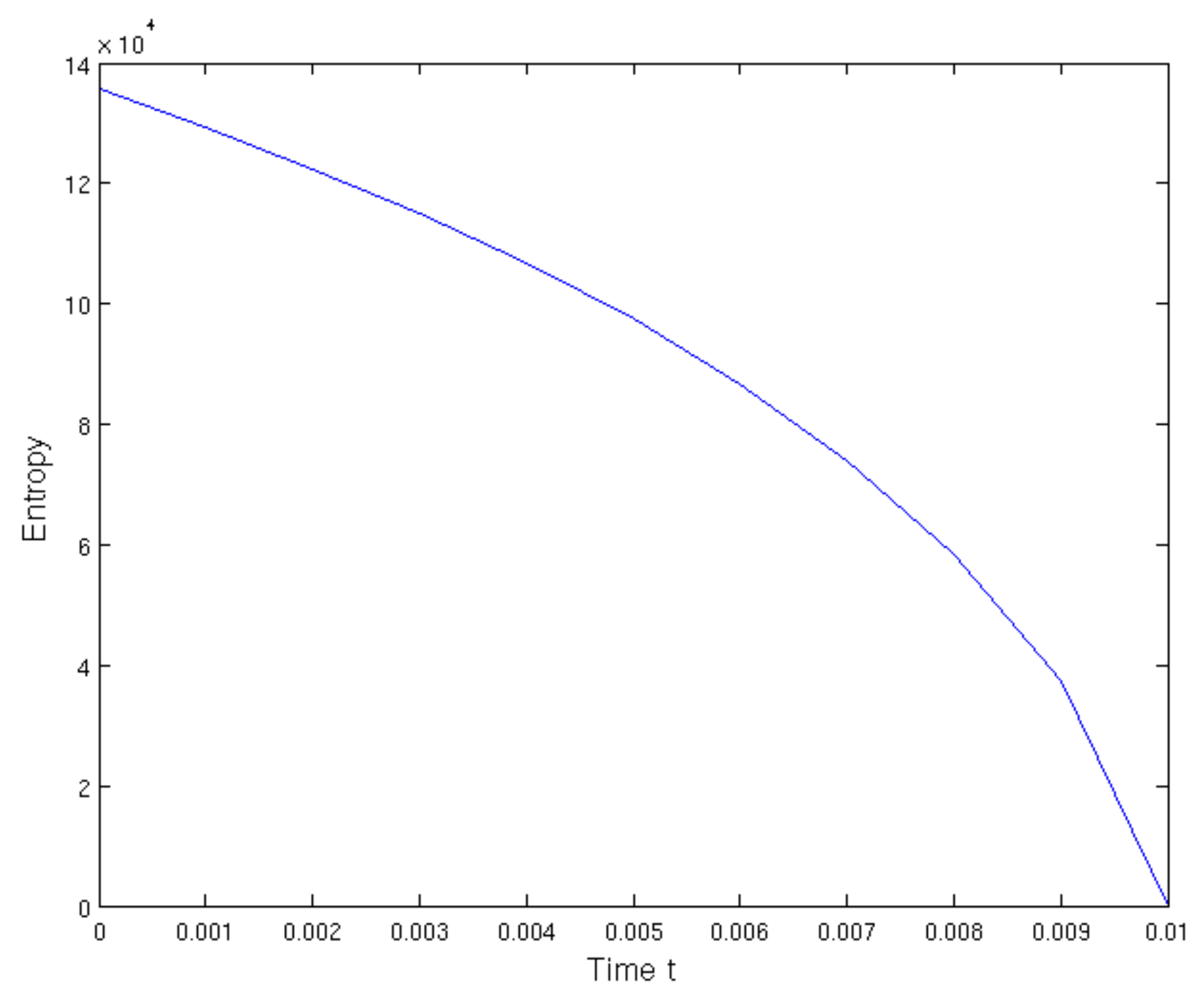}}
\caption{Simulation results for KS model with initial $M=1$ and $\chi = 1.1 \times 8\pi$.} \label{f:ex5}
\end{center}
\end{figure}

%%%%%%%%%%%%%%%%%%%%%%%%%%%%

\subsection{1D: Finite Difference approach}

In this subsection, we briefly come back to the one dimensional problem as in the previous section and we show that certain finite differences method keep at the fully discrete level the asymptotic properties of the corresponding continuum problem. This general feature is quite desirable in general dimension for suitable approximations of the gradient flow equations \eqref{gf:0}. In fact, developing structure preserving schemes for general gradient flow equations is at the spotlight of research in this area, see \cite{bubba,bailo2018fully,CCP19} for finite volume schemes or the particle schemes discussed in subsection 1.4.4-5 above. To showcase these methods developed in \cite{gf-GT1,gf-GT2,gf-BCC,CM2009} for certain particular nonlinear diffusion equations, we use the modified Keller-Segel model in one dimension corresponding to linear diffusion $h(\rho)=\rho\log\rho$ with interaction potential $W(x)=\frac{\chi}\pi \log |x|$ in selfsimilar variables, that is, in the presence of a quadratic external potential $V(x)= \frac{x^2}2$ as in \cite{gf-BCC}.
The Euler-Lagrange optimality conditions \eqref{e:implicitdiffeo} can be rewritten in this particular case as
\begin{equation}\label{eq:eqfmoinsun}
  -\frac{X^{n+1}(w) - X^{n}(w)}{\Delta t} = \frac{\partial}{\partial w}
  \left[\left( \frac{\partial X^{n+1} (w)}{\partial w} \right)^{-1} \right]+
  X^{n+1}(w)+\frac{\chi}{\pi}  H[X^{n+1}]
\end{equation}
where $H$ corresponds to the Hilbert transform defined by
\begin{equation*}
 H[X](w) :=\frac1\pi \lim_{\eps \to 0} \int_{|X(w)-X(z)|\ge \eps} \frac{1}{X(w)-X(z)} \dd z\,.
\end{equation*}
For sake of simplicity we assume that we have an equidistant mass distribution of size $\Delta m$.
If we set $X^n_{i}:=X^n(i\Delta m)$, for any $i = 0\cdots N$, and $N\Delta m = 1$, the finite difference discretisation in space of~\eqref{eq:eqfmoinsun} is the following implicit Euler scheme 
\begin{multline} \label{eq:numerical scheme}
\displaystyle -\frac{X^{n+1}_{i} - X^{n}_{i}}{\Delta t} = \frac{1}{X^{n+1}_{i+1} - X^{n+1}_{i}} - \frac{1}{X^{n+1}_{i} - X^{n+1}_{i-1}} +X^{n+1}_{i}\\+ \frac{\chi}{\pi} \sum_{j\neq i|\ge \eps} \frac{\Delta m}{X^{n+1}_{i}-X^{n+1}_{j}}\;.
\end{multline}
We impose Neumann boundary conditions, {\it i.e.} for
any $n$, $\frac{1}{X^n_{N} - X^n_{N-1}} = 0$ and $\frac{1}{X^n_{1}
- X^n_{0}}=0$, so that the 'centre of mass' is conserved. That is for all $n$
$$ \displaystyle\sum_{i=0}^N X^n_i = 0.$$
The solution at each time step of the non-linear system of equations is
obtained by an iterative Newton-Raphson procedure as in the previous subsection.
The motivation for this numerical scheme is that we are able to show that for a fixed $\Delta t>0$, the
discrete solution converges to a unique steady state as time goes to infinity. To be precise the claim is the following: 
Assume $\chi<\chi_c$. Then the solution of the numerical scheme \eqref{eq:numerical
scheme} converges to the (unique) steady-state of the problem at an exponential rate.

First we need the following two characterizations of the (unique)
equilibrium state. The uniqueness will in fact follow from the
convergence proof, as we shall see later. The discrete function $(U_{i})$ is
an equilibrium if and only if for all $i$
\begin{equation} 0 = \frac{1}{U_{i+1} - U_{i}} - \frac{1}{U_{i}-U_{i-1}} + U_{i} + \frac\chi\pi \sum_{j\neq i}  \frac{\Delta m}{U_{i}-U_j},
\label{eq:discrete equilibrium 1} \end{equation}
or equivalently for all $k$
\begin{equation}
 \label{eq:discrete equilibrium 2}
(U_{k+1}-U_k) \left\{\frac\chi{\pi} \sum_{j=0}^k \sum_{i=k+1}^N  \frac{\Delta m}{U_{i}-U_j} - \sum_{i=0}^k U_{i}  \right  \} = 1.
\end{equation}
To see that~\eqref{eq:discrete equilibrium 1} and~\eqref{eq:discrete equilibrium 2} are equivalent, rewrite the latter as
\[ 
\forall k \quad  \frac1{U_{k+1}-U_k} = \frac\chi{\pi} \sum_{j=0}^k \sum_{i=k+1}^N  \frac{\Delta m}{U_{i}-U_j} - \sum_{i=0}^k U_{i}, 
\] 
and then 'derive' it in a discrete way. Let us assume the existence of this equilibrium for $\chi<\chi_c$, easy to obtain by a discrete minimization argument. We proceed as computing the discrete time evolution of the $L^2-$distance between $X^n$ and the stationary state $U$, that is, the discrete counterpart of the 2-Wasserstein distance between $X^n$ and $U$ given by
$$
\|X^{n} - U\|^2= \sum_i (X^{n+1}_{i} - U_i)^2.
$$
We deduce that
\begin{align*} 
\frac1{2\Delta t} \Big( \|X^{n+1} - U\|^2 \!- \|X^{n} - U\|^2 \Big)
\!& =
\!\frac1{2\Delta t} \sum_i (X^{n+1}_{i} - X^n_i) (X^{n+1}_{i}+X^n_i - 2U_{i}) \\
& \leq  \sum_i \frac{X^{n+1}_i - X^n_i}{\Delta t} (X^{n+1}_{i} - U_{i}) .
\end{align*} 
We then input the evolution equation for $X^{n+1}-X^n$, and obtain thanks to~\eqref{eq:discrete equilibrium 1},
\begin{align*} 
 \frac1{2\Delta t} \Big( \|X^{n+1} - U\|^2 - \|X^{n} - U\|^2 \Big) & \leq  -  \sum_{i} \Big( \frac{1}{X_{i+1} - X_{i}} - \frac{1}{X_{i}-X_{i-1}} -  \frac{1}{U_{i+1} - U_{i}} \\
& \quad \quad\quad+ \frac{1}{U_{i}-U_{i-1}}    + X_i - U_{i} + \frac\chi\pi\sum_{j\neq i} \frac{\Delta m}{X_i-X_j}\\
& \quad \quad\quad - \frac\chi\pi\sum_{j\neq i} \frac{\Delta m}{U_{i}-U_j}\Big) (X_i-U_{i}) \\
& =   A_n + B_n + C_n,
\end{align*} 
where $V$ stands for $X^{n+1}$ without any ambiguity.
We integrate by part the first (diffusion) contribution,
\begin{align*} 
 A &= - \sum_i \Big( \frac{1}{X_{i+1} - X_{i}} - \frac{1}{X_{i}-X_{i-1}} -  \frac{1}{U_{i+1} - U_{i}} + \frac{1}{U_{i}-U_{i-1}} \Big)(X_i-U_{i}) \\
& =   \sum_i \Big(\frac{1}{X_{i+1} - X_{i}} - \frac{1}{U_{i+1} - U_{i}}\Big)(X_{i+1} - U_{i+1} - X_i + U_{i}).
\end{align*} 
We have carefully used the boundary conditions. We can rewrite $A$ using zero-homogeinity of the last expression, namely
\[ 
A =  \sum_i \gamma\Big( \frac{X_{i+1} - X_{i}}{U_{i+1} - U_{i}} \Big), 
\]
where $\gamma (\lambda) = 2- \lambda - \lambda^{-1}$ is concave and non-positive.
The second contribution coming from variables rescaling is obvious but crucial, namely
$$
 B = -  \sum_i (X_i - U_{i})^2 = -  \|X^{n+1} - U\|^2.
$$
The last (interaction) contribution is given by
\begin{align*} 
 C & = - \frac\chi\pi\sum_i \Big( \sum_{j\neq i} \frac{\Delta m}{X_i-X_j} - \sum_{j\neq i} h \frac{\Delta m}{U_{i}-U_j} \Big) (X_i - U_{i}) \\
& = - \frac\chi{2\pi} \sum\sum_{i,j,\, i\neq j} {\Delta m} \Big(  \frac1{X_i-X_j} -  \frac1{U_{i}-U_j} \Big) (X_i-X_j - U_{i}+U_j) \\
& = - \frac\chi{2\pi} \sum\sum_{i,j,\, i\neq j} {\Delta m}\, \gamma \Big(\frac{X_i-X_j}{U_{i}-U_j}\Big).
\end{align*} 
We refer to \cite{gf-BCC} to check that the concavity of $\gamma$ shows that
\begin{align*} 
 C\leq  -  \sum_k \gamma\Big(\frac{V^{k+1}-V^k}{U^{k+1}-U^k}\Big) (U^{k+1}-U^k) \left\{\frac\chi{\pi} \sum_{j=0}^k \sum_{i=k+1}^N   \frac{\Delta m}{U_{i}-U_j} - \sum_{i=0}^k U_{i} \right\}.
\end{align*} 
The alternative representation of the stationary solution~\eqref{eq:discrete equilibrium 2} implies that $A+C \leq 0$. As a consequence we obtain
\begin{equation}
\label{eq:convergence estimation}
 \frac1{2\Delta t} \Big( \|X^{n+1} - U\|^2 - \|X^{n} - U\|^2 \Big)  \leq  -  \|X^{n+1} - U\|^2  .
\end{equation}
We finally get the exponential convergence rate,
\[ \|X^{n} - U\|^2 \leq \Big(\frac1{1+2\Delta t}\Big)^n\|X^0-U\|^2.\]
If $\Delta t$ is small, we can thus approximate $\log(1+2\Delta t)\approx
2\Delta t$ and $\Big(\frac1{1+2\Delta t}\Big)^n \approx
\exp(-2n\Delta t)\approx \exp(-2t)$. Thus, the bound on the rate, we
find, does not depend on the parameter $\chi<\chi_c$.

We can deduce {\em a posteriori} the
uniqueness of the equilibrium. As a matter of fact let consider
another equilibrium state $\tilde U$ and set $X^{n+1} = X^n =
\tilde U$ in the above computations. We eventually obtain
$\|\tilde U - U\| \leq 0$ from~\eqref{eq:convergence estimation},
which proves the uniqueness.

%%%%%%%%%%%%%%%%%%%%%%%%%%%%%%%%%%%%%%%

\section{Other variational approaches}\label{s:va}

In this final section, we want to briefly mention another recent line of research for computing the solutions of the variational scheme \eqref{gf:mm} and even calculate the optimal transportation problem, i.e., computing the geodesic curve joining two given densities. This line of research deals with the following  dynamic reformulation of the Wasserstein distance $\wass(\rho_0,\rho_1)$ due to Benamou and Brenier \cite{BB00}, where the distance is obtained as
\begin{align} \label{BB1}
&\wass(\rho_0,\rho_1)   = \inf_{(\rho,\velo) \in \C_0}  \left\{ \int_0^1 \int_\Omega |\velo(x,t)|^2 \,\rd \rho(x,t)  \rd t \right\}^{1/2} ,
\end{align}
where $(\rho,\velo)\in AC(0,1;\P(\Omega) )\times  L^1(0,1; L^2(\rho))$ belongs to the constraint set $\C_0$ provided that
\begin{align} \label{BB2}
 \partial_t \rho + \nabla \cdot (\rho \velo) = 0 &\,\,\text{ on } \Omega \times[0,1] \\
  (\rho \velo) \cdot \nml = 0   &\,\,\text{ on } \partial \Omega \times [0,1] ,\label{BB3p5} \\
 \rho(\cdot,0) = \rho_0, \ \rho(\cdot, \Dtime) = \rho_{\Dtime} &\,\,\text{ on } \Omega. \label{BB3}
\end{align}
A curve $\rho$ in $\P(\Omega)$ is \emph{absolutely continuous} in time, denoted $\rho \in AC(0,1;\P(\Omega) )$,  if there exists $w \in L^1(0,1)$ so 
that $\wass(\rho(\cdot, t_0), \rho(\cdot, t_1)) \leq \int_{t_0}^{t_1} w(s) \rd s$ for all $0 < t_0 \leq t_1 < 1$. The PDE constraint 
(\ref{BB2}-\ref{BB3p5}) holds in the duality with smooth test functions on $\Rd \times [0,1]$, i.e. for all $f \in C^\infty_c(\Rd \times [0,1])$,
\begin{align*}
\int_0^1  \int_\Omega \left[ \partial_t f(x,t) \rho(x,t) \right.&+\left. \grad f(x,t) \cdot  \velo(x,t) \rho(x,t) \right] \rd x \rd t \\
&+ \int_\Omega \left[ f(x,0)\rho_0(x) - f(x,1)\rho_1(x)\right] \rd x = 0\,.
\end{align*}
This dynamic reformulation reduces the problem of finding the Wasserstein distance between any two measures to identifying  the curve  in $\P(\Omega)$ that 
connects them with minimal kinetic energy.  Since  (\ref{BB1}) is not strictly convex, and the PDE constraint (\ref{BB2}) is nonlinear, in Benamou and Brenier's original work, 
they restrict their attention to the case $\rho(\cdot, t) \in \P_{ac}(\Omega)$ and introduce the momentum variables $m = \velo \rho$,  in order to rewrite (\ref{BB1}) as
\begin{align}\label{BB4}
\wass(\rho_0,\rho_1)^2 =  \min_{(\rho, m) \in \C_1}  \int_0^1 \int_\Omega \Phi(\rho(x,t), m(x,t))\rd x \,\rd t,
\end{align}
where
\begin{align*} %\label{Jdef}
\Phi(\rho, m) = 
\begin{cases}
 \frac{\|m \|^2}{ \rho} & \text{ if } \rho > 0\\
 0 & \text { if } (\rho,m) = (0,0)\\
 \infty & \text{ otherwise}
\end{cases}
\end{align*}
and $(\rho,m) \in AC(0,1; \P_{ac}(\Omega)) \times L^1(0,1;L^2(\rho^{-1}))$ belong to the constraint set $\C_1$ provided that
\begin{align*} %\label{BB5}
 \partial_t \rho + \nabla \cdot m = 0 &\,\,\text{ on } \Omega \times[0,1]
 \\ m \cdot \nml= 0   &\,\,\text{ on } \partial \Omega \times [0,1] . %\label{BB7}
 \\ \rho(\cdot,0) = \rho_0, \ \rho(\cdot, \Dtime) = \rho_{\Dtime} &\,\,\text{ on } \Omega . %\label{BB6}
\end{align*}
After this reformulation, the integral functional  
\begin{align} \label{integralJ}
(\rho,m) \mapsto \int_0^1 \int_\Omega \Phi(\rho,m)
\end{align}
 is strictly convex along linear interpolations and lower semicontinuous with respect to weak-* convergence \cite[Example 2.36]{ambrosio2000functions}, and the PDE constraint is linear.
As an immediate consequence, one can conclude that minimizers are unique.
Furthermore, for any $\rho_0, \rho_1 \in \P_{ac}(\Omega)$, a direct computation shows that the minimizer $(\bar \rho, \bar m)$ is given by  the Wasserstein geodesic from
$\rho_0$ to $\rho_1$, see \cite{Villani03a, AGS2005,santambrogio2015optimal} for further background on optimal transport. Furthermore, given any minimizer $(\bar \rho, \bar m)$ of 
(\ref{BB4}), one can recover the optimal transport map in terms of $(\bar \rho, \bar v)$. Building upon Benamou and Brenier's  dynamic reformulation of the Wasserstein distance, one can also consider a dynamic reformulation of the JKO scheme \eqref{gf:mm}.  In particular, substituting (\ref{BB4}) in \eqref{gf:mm} leads to the following dynamic JKO scheme: given $\Delta t >0$, $\energy$, and $\rho_0$, solve the constrained optimization problem,
\begin{align*} %\label{BBobjfn}
\inf_{(\rho,m) \in \C}  \int_0^1 \int_\Omega \Phi(\rho(x,t), m(x,t))\,\rd x \rd t + 2\Delta t \energy(\rho(\cdot,1)) ,
\end{align*}
where $(\rho,m) \in AC(0,1; \P_{ac}(\Omega)) \times L^1(0,1;L^2(\rho^{-1}))$ belong to the constraint set $\C$ provided that
\begin{align}
 \partial_t \rho + \nabla \cdot m = 0 &\text{ on } \Omega \times[0,1] ,   \, m \cdot \eta = 0   \text{ on } \partial \Omega \times [0,1] , \, \text{ and } \rho(\cdot,0) = \rho_0  \text{ on } \Omega . \label{BB8}
\end{align}
This Eulerian approach to optimal transport has also been used  for numerical purposes. In fact, the first numerical methods for the Wasserstein distance were done 
in this formulation in the seminal paper of Benamou and Brenier \cite{BB00} by using the augmented Lagrangian method ALG2 for convex optimization problems. More recently, 
modern proximal splitting methods have been used by Papadakis, Peyre, and Oudet in \cite{PPO14}. Adding an additional Fisher information term in this dynamic formulation 
(in analogy with entropic regularization) has also been explored in \cite{LYO17}. For a detailed survey of state of the art methods in computational optimal transport, 
we refer the reader to the recent book by P\'eyre and Cuturi in \cite{PC18}. We also refer to the companion Chapter in this volume of Quentin Merigot.

However, this strategy has only been recently used for computing the Wasserstein distance integrated with the JKO scheme (\ref{gf:mm}) in order to simulate  partial differential equations of the form (\ref{gf:0}), see for instance \cite{BCMO16,BCL16,carrillo2019primal,li2019fisher}. The approach in \cite{carrillo2019primal} is to discretize first the optimality conditions in \eqref{BB8} in order to solve numerically the corresponding optimization problem at the discrete level. The convexity of the problem at the discrete level is conserved as soon as the energy $\energy$ is convex, since the functional related to the distance $\wass$ is convex and the constraints are linear. Unlike \cite{BCL16}, which applied Benamou and Brenier's classical ALG2 discretization to numerically approximate solutions of this minimization problem, the authors in \cite{carrillo2019primal} solve the optimization problem using a modern primal dual three operator splitting scheme due to Yan \cite{Yan17}. The previous work by Papadakis, Peyre, and Oudet \cite{PPO14} applied a similar two operator splitting scheme to simulate the Wasserstein distance. However, there are a few key differences in these two approaches. First,  the implementation of the primal dual splitting scheme in  \cite{carrillo2019primal} is done in a manner that does not require matrix inversion of the finite difference operator, which reduces the computational cost. In fact, the authors are able to obtain the exact expression for the proximal operator, which allows their method to be truly positivity preserving and very fast computationally.

Another key difference between these methods lies in the treatment of the linear PDE constraint in the dynamic reformulation of the Wasserstein distance. While most of previous work has imposed the linear PDE constraint \emph{exactly}, via a finite difference approximation, the authors in \cite{carrillo2019primal} allow the linear PDE constraint to hold up to an error of order $\delta$, which can be tuned according to the spatial discretization $(\Delta x)$ and the inner temporal discretization $(\Delta t)$ to respect the order of accuracy of the finite difference approximation of the continuity equation and the error in the initial and boundary constraints. Numerically, this allows their method to converge in fewer iterations, without any reduction in accuracy. Theoretically, this allows them to prove convergence of minimizers of the fully discrete problem to minimizers of the JKO scheme (\ref{gf:mm}), since minimizers of the fully discrete problem always exist, which is not the case when the PDE constraint is enforced exactly.
Finally, let us mention that the authors in \cite{li2019fisher} have explored the Fisher regularization of the optimal transportation cost leading to very good results and fast computational algorithms however, not solving exactly the JKO steps and therefore not being exactly energy decreasing at the fully discrete level. In summary, the Eulerian approach to solving the JKO minimization problem \eqref{gf:mm} is also being successfully and efficiently applied for numerical purposes due to the smart use of recent advances in numerical methods for optimization of convex functionals.

%\section{Conclusion}\label{s:con}

\section*{Appendix}

{\bf Energy on Lagrangian maps.} We here compute the variational derivative of the functional $\nrg^\#(X)$.

\begin{lemma}
  \label{gf:lem.cov}
  With $\nrg^\#(X)=\nrg(X\#\theta)$, we have at each diffeomorphism $\bar X:\Theta\to\Omega$:
  \begin{align}
    \label{gf:cov}
    \frac1\theta\frac{\delta\nrg^\#}{\delta X}(\bar X) = \nabla\frac{\delta\nrg}{\delta\rho}(\bar X\#\theta)\circ\bar X.
  \end{align}
\end{lemma}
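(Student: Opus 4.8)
The plan is to compute the first variation of $\nrg^\#$ directly from its definition and then to translate it, via a change of variables to the reference domain, into the Eulerian first variation of $\nrg$. First I would fix a smooth perturbation $Y:\Theta\to\setR^d$ that is tangent to $\partial\Omega$ (so that $\bar X+\eps Y$ remains an admissible diffeomorphism of $\Theta$ onto $\Omega$ for $|\eps|$ small), and set $X_\eps:=\bar X+\eps Y$, $\rho_\eps:=X_\eps\#\theta$, and $\bar\rho:=\bar X\#\theta$. By the defining property of the (unweighted) variational derivative,
\[
\int_\Theta \frac{\delta\nrg^\#}{\delta X}(\bar X)\cdot Y\dd\xi
= \frac{\dd}{\dd\eps}\Big|_{\eps=0}\nrg^\#(X_\eps)
= \frac{\dd}{\dd\eps}\Big|_{\eps=0}\nrg(\rho_\eps).
\]

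The crucial step is to identify $\partial_\eps\rho_\eps|_{\eps=0}$. The key observation is that the one-parameter family $X_\eps$ transports $\theta$ along the $\eps$-velocity $\partial_\eps X_\eps=Y$, whose Eulerian representation at a point $x=\bar X(\xi)$ is the vector field $\velo:=Y\circ\bar X^{-1}$. Consequently $\rho_\eps$ satisfies the continuity equation $\partial_\eps\rho_\eps+\dv(\rho_\eps\,\velo_\eps)=0$ with $\velo_\eps=Y\circ X_\eps^{-1}$, and evaluating at $\eps=0$ yields $\partial_\eps\rho_\eps|_{\eps=0}=-\dv(\bar\rho\,\velo)$.

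I then apply the chain rule for $\nrg$ and integrate by parts:
\begin{align*}
\frac{\dd}{\dd\eps}\Big|_{\eps=0}\nrg(\rho_\eps)
&= \int_\Omega\frac{\delta\nrg}{\delta\rho}(\bar\rho)\,\partial_\eps\rho_\eps\big|_{\eps=0}\dd x \\
&= -\int_\Omega\frac{\delta\nrg}{\delta\rho}(\bar\rho)\,\dv(\bar\rho\,\velo)\dd x
= \int_\Omega\nabla\frac{\delta\nrg}{\delta\rho}(\bar\rho)\cdot\velo\,\bar\rho\dd x,
\end{align*}
where the boundary contribution vanishes because $\velo$ is tangent to $\partial\Omega$ (equivalently, by the no-flux structure). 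Finally I change variables $x=\bar X(\xi)$, using $\dd x=\det\dff\bar X\dd\xi$, the push-forward identity $\bar\rho\circ\bar X=\theta/\det\dff\bar X$, and $\velo\circ\bar X=Y$; the Jacobian factors cancel exactly, leaving
\[
\int_\Omega\nabla\frac{\delta\nrg}{\delta\rho}(\bar\rho)\cdot\velo\,\bar\rho\dd x
= \int_\Theta\Big[\nabla\frac{\delta\nrg}{\delta\rho}(\bar\rho)\circ\bar X\Big]\cdot Y\,\theta\dd\xi.
\]
Matching this against the opening identity for all admissible $Y$ gives $\frac{\delta\nrg^\#}{\delta X}(\bar X)=\theta\,\big[\nabla\frac{\delta\nrg}{\delta\rho}(\bar X\#\theta)\circ\bar X\big]$, which is precisely \eqref{gf:cov}.

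The main obstacle I expect is the rigorous justification of the identity $\partial_\eps\rho_\eps|_{\eps=0}=-\dv(\bar\rho\,\velo)$ together with the integration by parts: this requires sufficient regularity of $\bar X$ and $\bar\rho$ and enough control of the admissible perturbations $Y$ at the boundary to guarantee that $X_\eps$ stays a diffeomorphism and that the boundary term truly drops. At the formal level adopted throughout this chapter, however, these are the only delicate points; the chain rule, the change of variables, and the cancellation of the Jacobian are all routine.
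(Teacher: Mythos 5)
Your proposal is correct and follows essentially the same route as the paper's proof: compute the first variation of $\nrg^\#$ along a one-parameter family of maps, identify $\partial_\eps\rho_\eps$ via the continuity equation, integrate by parts, and change variables back to $\Theta$ so the Jacobian factors cancel. The only (immaterial) difference is that you generate the perturbation linearly as $\bar X+\eps Y$ and recover the Eulerian field $\velo=Y\circ\bar X^{-1}$, whereas the paper defines $X^s$ as the flow of a fixed $\velo\in C^\infty_c(\Omega)$, which sidesteps the boundary discussion you correctly flag.
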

\begin{proof}
  Let $\velo\in C^\infty_c(\Omega)$ be a smooth vector field,
  and define perturbations $(X^s)_{s\in\setR}$ of $\bar X$ via the flow of $\velo$,
  that is
  \begin{align*}
    \partial_sX^s = \velo\circ X^s,\quad X^0 = \bar X.
  \end{align*}
  On the one hand,
  \begin{align*}
    \frac{\dn}{\dd s}\bigg|_{s=0}\nrg^\#(X^s)
    = \int_\Theta \frac{\delta\nrg^\#}{\delta X}(X^s)\cdot\partial_sX^s\dd\xi\bigg|_{s=0}
    = \int_\Theta  \frac{\delta\nrg^\#}{\delta X}(\bar X)\cdot\velo\circ\bar X\dd\xi.
  \end{align*}
  And on the other hand, since
  \begin{align*}
    \partial_s(X^s\#\theta) = -\dv\big(X^s\#\theta\,\velo)
  \end{align*}
  by the properties of the push-forward,
  we have that
  \begin{align*}
    \frac{\dn}{\dd s}\bigg|_{s=0}\nrg(X^s\#\theta)
    &= \int_\Omega \frac{\delta\nrg}{\delta\rho}(X^s\#\theta)\,\partial_s(X^s\#\theta)\dd x\bigg|_{s=0}\\
    &= \int_\Omega \nabla\frac{\delta\nrg}{\delta\rho}(\bar X\#\theta)\cdot(\bar X\#\theta\,\velo)\dd x\\
    &= \int_\Theta  \left[\nabla\frac{\delta\nrg}{\delta\rho}(\bar X\#\theta)\right]\circ\bar X\cdot\velo\circ\bar X\,\theta\dd\xi.
  \end{align*}
  Since $\velo$ is arbitrary, and $\bar X$ is a diffeomorphism,
  the fact that $\nrg^\#(X^s)$ and $\nrg(X^s\#\theta)$ --- and hence also their $s$-derivatives --- are equal
  implies \eqref{gf:cov}.
\end{proof}

\bibliographystyle{alpha}
\bibliography{gv_numerics}

\end{document}